\newcommand\testshape{family=\f@family; series=\f@series; shape=\f@shape.}
\def\myemphInternal#1{\if n\f@shape%
	\begingroup\itshape #1\endgroup\/%
	\else\begingroup\sffamily\small #1\endgroup%
	\fi}
\def\myemph{\futurelet\testchar\MaybeOptArgmyemph}
\def\MaybeOptArgmyemph{\ifx[\testchar \let\next\OptArgmyemph
	\else \let\next\NoOptArgmyemph \fi \next}
\def\OptArgmyemph[#1]#2{\index{#1}\myemphInternal{#2}}
\def\NoOptArgmyemph#1{\myemphInternal{#1}}
\newtheorem{theorem}[subsection]{Theorem}
\newtheorem{lemma}[subsection]{Lemma}
\newtheorem{corollary}[subsection]{Corollary}
\theoremstyle{remark}
\newtheorem{remark}[subsection]{Remark}
\numberwithin{equation}{subsection}
\newcommand\ie{\textit{i.e.\,}}
\newcommand{\Eg}{{E.g.\,}}
\newcommand\RRR{\mathbb{R}}
\newcommand\ZZZ{\mathbb{Z}}
\newcommand{\NNN}{\mathbb{N}}
\newcommand{\KR}{\Gamma}
\newcommand{\Stab}{\mathcal{S}}
\newcommand{\Aut}{\mathrm{Aut}}
\newcommand{\Gstab}{G}
\newcommand{\st}{\mathrm{st}}
\renewcommand{\mod}{\mathrm{mod}}
\newcommand{\Diff}{\mathcal{D}}
\newcommand{\Orb}{\mathcal{O}}
\newcommand{\id}{\mathrm{id}}
\renewcommand{\ker}{\mathrm{Ker}}
\newcommand{\cC}{\mathcal{C}}
\newcommand{\bF}{\mathbf{F}}
\newcommand\nV{{V}}
\newcommand\nW{{W}}
\newcommand{\sV}{\mathsf{\nV}}
\newcommand{\sW}{\mathsf{\nW}}
\newcommand\xxnm{nm} %{\color{red}mn}}
\newcommand\xxnmn{nmn} %{\color{red}mn}}
\begin{document}
\title[Deformations of smooth functions on $2$-torus]
{Deformations of smooth functions on $2$-torus}

\author{Bohdan Feshchenko}
\address{Topology laboratory, Department of algebra and topology, Institute of Mathematics of National Academy of Science of Ukraine,
Tereshchenkivska, 3, Kyiv, 01601, Ukraine}
\curraddr{}
\email{fb@imath.kiev.ua}

\subjclass[2010]{}
\keywords{Surface, isotopy, Morse function, wreath product}

\begin{abstract}
Let $f$ be a Morse function on a smooth compact surface $M$ and $\mathcal{S}'(f)$ be the group of $f$-preserving diffeomorphisms of $M$ which are isotopic to the identity map.
Let also $G(f)$ be a group of automorphisms of the Kronrod-Reeb graph of $f$ induced by elements from $\mathcal{S}'(f)$, and $\Delta'$ be the subgroup of $\mathcal{S}'(f)$ consisting of diffeomorphisms which trivially act on the graph of $f$ and are isotopic to the identity map.
The group $\pi_0\mathcal{S}'(f)$ can be viewed as an analogue of a mapping class group for $f$-preserved diffeomorphisms of $M$.
The groups $\pi_0\Delta'(f)$ and $G(f)$ encode ``combinatorially trivial'' and ``combinatorially nontrivial'' counterparts of $\pi_0\mathcal{S}'(f)$ respectively.
In the paper we compute groups $\pi_0\mathcal{S}'(f)$, $G(f)$, and $\pi_0\Delta'(f)$ for Morse functions on $2$-torus $T^2$.
\end{abstract}

\maketitle

\section{Introduction}\label{sec:Introduction}
Homotopy properties of Morse functions on surfaces were studied by many authors.
\Eg connected components of the space of Morse functions were computed in the unpublished paper by H.~Zieschang, by S.~Matveev in the paper by E.~Kudryavtseva~\cite{Kudryavtseva:MatSb:1999}, and V.~Sharko~\cite{Sharko:PrIntMat:1998}, cobordism groups of the space of Morse functions on surfaces were described by K.~Ikegami and O.~Saeki~\cite{IkegamiSaeki:JMSJap:2003}, and B.~Kalmar~\cite{Kalmar:KJM:2005}.
Homotopy groups of stabilizers and orbits of Morse functions on surfaces with respect to the action of diffeomorphism groups were studied by S.~Maksymenko~\cite{Maksymenko:AGAG:2006, Maksymenko:MFAT:2010, Maksymenko:ProcIM:ENG:2010, Maksymenko:UMZ:ENG:2012, Maksymenko:DefFuncI:2014} and E.~Kudryavtseva \cite{Kudryavtseva:MathNotes:2012, Kudryavtseva:MatSb:2013}.
We will give an overview of these results.

Let $M$ be a smooth compact surface and $X$ be a closed (possible empty) subset of $M$.
The group $\Diff(M,X)$ of diffeomorphisms fixed on some neighborhood of $X$ acts on the space of smooth functions $C^{\infty}(M)$ by the rule: $C^{\infty}(M)\times\Diff(M,X)\to C^{\infty}(M)$, $(f,h) \mapsto f\circ h$.
With respect to this action we denote by
\begin{gather*}
	\mathcal{S}(f,X) = \{h\in \Diff(M,X)\mid  f\circ h = f\},\\
	\Orb(f,X) = \{f\circ h\mid  h\in\Diff(M,X)\}
\end{gather*}
the stabilizer and the orbit of $f\in C^{\infty}(M)$ respectively.
Endow strong Whitney $C^{\infty}$-topologies on $C^{\infty}(M)$ and $\Diff(M,X)$.
Then for a $f\in C^{\infty}(M)$ these topologies induce some topologies on $\Stab(f,X)$ and $\Orb(f,X)$.
We denote by $\Diff_{\id}(M,X)$, $\Stab_{\id}(f,X)$, and $\Orb_f(f,X)$ connected components of the identity map $\id_M$ of $\Diff(M,X)$, $\Stab(f,X)$, and the component of $\Orb(f,X)$ containing $f$ respectively.
If $X=\varnothing$ we will omit the symbol ``$\varnothing$ '' from our notations, \ie, set $\Diff(M):=\Diff(M,\varnothing)$, $\Stab(f) := \Stab(f,\varnothing)$, $\Orb(f):= \Orb(f,\varnothing)$, and so on.

By a \myemph{Morse function} $f$ on a $M$ we will mean a smooth function which satisfies the following conditions:
\begin{itemize}[leftmargin=*]
	\item all critical points of $f$ are non-degenerate and belong to the interior of $M$;
	\item the function $f$ takes constant values on each boundary component of $M$.
\end{itemize}

Notice that if $N\subset M$ is a subsurface whose boundary components are regular components of some level-sets of a Morse function $f:M\to\RRR$, then the restriction $f|_{N}$ is a Morse function as well in the sense of the above definition.

\begin{theorem}\label{thm:homotopy-orbits}
	{\rm\cite{Sergeraert:ASENS:1972,Maksymenko:AGAG:2006,Maksymenko:UMZ:ENG:2012,Maksymenko:OsakaJM:2011}.}
	Let $f$ be a Morse function on a smooth compact surface $M$, and $X$ be a closed (possibly empty) subset of $M$ consisting of finitely many connected components of some level-sets of $f$ and some critical points of $f$.
	Then the following statements hold.
	\begin{enumerate}[wide, label={\rm(\arabic*)}, itemsep=1ex]
		\item
		The map $p:\mathcal{D}_{\id}(M,X)\to \mathcal{O}(f,X)$ defined by $p(h) = f\circ h$ is a Serre fibration with the fiber $\mathcal{S}(f,X)$.
		Hence $p(\mathcal{D}_{\id}(M)) = \mathcal{O}_f(f)$, and the restriction $p|_{\mathcal{D}_{\id}(M)}: \mathcal{D}_{\id}(M)\to \mathcal{O}_f(f,X)$ is also a Serre fibration with the fiber $\Stab'(f,X) = \Stab(f)\cap \Diff_{\id}(M,X)$.
		
		\item
		$\mathcal{O}_f(f,X) = \mathcal{O}_f(f, X\cup \partial M)$, so $\pi_k(\mathcal{O}_f(f,X)) = \pi_k(\mathcal{O}_f(f, X\cup \partial M))$ for $k\geq 1$.
		
		\item
		Suppose that either $f$ has a saddle point or $M$ is a non-orientable surface.
		Then $\mathcal{S}_{\id}(f)$ is contractible, $\pi_k\mathcal{O}_f(f) = \pi_k M$, $k\geq 3$, $\pi_2\mathcal{O}_f(f) = 0$, and for $\pi_1\mathcal{O}_f(f)$ we have the following short exact sequence of groups:
		\begin{equation}
			\label{eq:pi1-main}
			\xymatrix{
				\pi_1\Diff_{\id}(M) \
				\ar@{^{(}->}[r]^{p_1} &
				\ \pi_1\Orb_f(f) \
				\ar@{->>}[r]^{\partial_1} &
				\ \pi_0 \mathcal{S}'(f)
			}\footnote[1]{Throughout the text injective and surjective maps of groups will be also denoted by hooked $\hookrightarrow$ and double-headed arrows $\twoheadrightarrow$ respectively.}.
		\end{equation}
		Moreover the group $p_1(\pi_1\mathcal{D}_{\id}(M))$ is contained in the center of $\pi_1\mathcal{O}_f(f)$.
		
		\item
		If $\chi(M) < |X|$, then $\mathcal{D}_{\id}(M,X)$ is contractible, $\pi_k\mathcal{O}_f(f,X) = 0$ for $k\geq 2$, and the boundary map
		\[ \partial_1: \pi_1\mathcal{O}_f(f,X)\longrightarrow \pi_0\mathcal{S}'(f,X) \]
		is an isomorphism.
	\end{enumerate}
\end{theorem}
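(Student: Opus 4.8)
The plan is to run the long exact homotopy sequence of the Serre fibration supplied by statement~(1), feeding into it a contractibility result for the relative diffeomorphism group. Over the path component of $f$ the map $p$ of statement~(1) restricts to a Serre fibration
\[
	p\colon \Diff_{\id}(M,X) \longrightarrow \Orb_f(f,X), \qquad p(h)=f\circ h,
\]
with fiber $\mathcal{S}'(f,X)=\mathcal{S}(f)\cap\Diff_{\id}(M,X)$ over the basepoint $f$; here one uses that $\Diff_{\id}(M,X)$ is path-connected and that $p$ lifts paths, so its image is exactly the component $\Orb_f(f,X)$, and that $\mathcal{S}(f,X)\cap\Diff_{\id}(M,X)=\mathcal{S}(f)\cap\Diff_{\id}(M,X)$ because $\Diff_{\id}(M,X)\subseteq\Diff(M,X)$.

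First I would prove that the hypothesis $\chi(M)<|X|$ forces $\Diff_{\id}(M,X)$ to be contractible. Removing small invariant neighborhoods of the finitely many components of $X$ turns $M$ into a compact surface each of whose pieces has negative Euler characteristic, and a diffeomorphism fixed near $X$ is the same datum as a boundary-fixing diffeomorphism of these pieces. For such hyperbolic pieces the classical theorems of Earle--Eells, Earle--Schatz and Gramain assert that the identity component of the boundary-fixing diffeomorphism group is contractible; assembling the pieces yields $\Diff_{\id}(M,X)\simeq *$. This is the step I expect to be the main obstacle, both because it is where the numerical hypothesis is genuinely used and because one must match the combinatorial count $|X|$ against $\chi(M)$ so that every piece produced by cutting is honestly hyperbolic.

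Granting contractibility of the total space, every term $\pi_k\Diff_{\id}(M,X)$ vanishes, the long exact sequence collapses, and the connecting homomorphisms
\[
	\partial_k\colon \pi_k\Orb_f(f,X)\xrightarrow{\ \cong\ }\pi_{k-1}\mathcal{S}'(f,X)
\]
become isomorphisms for every $k\geq1$. It then remains to identify the homotopy type of the fiber. Its identity component is $\mathcal{S}_{\id}(f,X)$, and I would argue this is contractible: when $X\neq\varnothing$ (which is the only case relevant here, since $\chi(M)<|X|$ with $|X|=0$ forces $\chi(M)<0$ and hence a saddle point, so statement~(3) already applies) fixing a neighborhood of $X$ obstructs the level-set ``shift'' flows that generate the toral symmetry responsible for the nontrivial homotopy of $\mathcal{S}_{\id}(f)$, collapsing that torus factor to a point. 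Consequently $\pi_k\mathcal{S}'(f,X)=0$ for all $k\geq1$, while $\pi_0\mathcal{S}'(f,X)$ is its discrete group of components.

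Substituting this into the isomorphisms above gives $\pi_k\Orb_f(f,X)\cong\pi_{k-1}\mathcal{S}'(f,X)=0$ for every $k\geq2$, and for $k=1$ it yields the isomorphism $\partial_1\colon\pi_1\Orb_f(f,X)\xrightarrow{\cong}\pi_0\mathcal{S}'(f,X)$, which is exactly the asserted statement~(4). The only genuinely new input beyond statements~(1)--(3) is the contractibility of $\Diff_{\id}(M,X)$, so the bulk of the work is confirming that the hypothesis $\chi(M)<|X|$ places every cut piece in the range covered by the classical rigidity theorems.
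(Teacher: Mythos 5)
First, a point of comparison: the paper does not actually prove Theorem~\ref{thm:homotopy-orbits}. It is stated as background and attributed to \cite{Sergeraert:ASENS:1972,Maksymenko:AGAG:2006,Maksymenko:UMZ:ENG:2012,Maksymenko:OsakaJM:2011}; the only thing the text adds after the statement is the explicit description of $\partial_1$. So your proposal can only be measured against those sources, and against them your architecture is the right one: statements (1)--(3) are exactly the cited results, and (4) follows by restricting the fibration of (1) to $\mathcal{D}_{\id}(M,X)$, proving the total space is contractible, knowing that the identity component $\mathcal{S}_{\id}(f,X)$ of the fiber has vanishing higher homotopy, and collapsing the long exact sequence. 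That reduction is sound, including your observation that $X=\varnothing$ together with $\chi(M)<0$ forces a saddle (though to get the isomorphism in that case you also need $\pi_1\mathcal{D}_{\id}(M)=0$, i.e.\ contractibility of $\mathcal{D}_{\id}(M)$, not merely the exact sequence of statement (3)).

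However, the step you yourself single out as the crux is genuinely wrong as stated. It is not true that $\chi(M)<|X|$ forces every piece of $M$ cut along $X$ to have negative Euler characteristic, and the failures are precisely the central examples of this paper: for $M=T^2$ and $X$ a single essential regular level circle (the situation of part~\ref{enum:thm:main:2} of Theorem~\ref{thm:main}) the hypothesis $0<1$ holds, but cutting produces a cylinder with $\chi=0$; for $X$ a null-homotopic regular level circle, one of the pieces is a $2$-disk with $\chi=1$. Hyperbolic rigidity (Earle--Eells \cite{EarleEells:DG:1970}, Earle--Schatz) says nothing about these pieces; one needs Smale's theorem that $\mathcal{D}(D^2,\partial D^2)$ is contractible and Gramain's results \cite{Gramain:ASENS:1973} for the cylinder and M\"obius band. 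The statement that actually makes the argument work is that $\mathcal{D}_{\id}(N,\partial N)$ is contractible for \emph{every} compact surface $N$ with nonempty boundary; granting this, $\mathcal{D}_{\id}(M,X)$ is contractible whenever $X\neq\varnothing$ (and $M$ is connected) with no numerical count at all, and the hypothesis $\chi(M)<|X|$ is really needed only to cover the case $X=\varnothing$, where it reads $\chi(M)<0$ and Earle--Eells/Gramain applies to $M$ itself. So the reading of the hypothesis as ``matching the count so every cut piece is hyperbolic'' would make your proof collapse exactly on the surfaces this paper studies. A secondary soft spot: your treatment of the fiber is a heuristic rather than a proof --- that $\mathcal{S}_{\id}(f,X)$ is contractible is a nontrivial theorem of Maksymenko \cite{Maksymenko:AGAG:2006}, and ``fixing a neighborhood of $X$ obstructs the shift flows'' is a plausibility argument, not a derivation; it should be cited or reproved. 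With the corrected contractibility input and an honest reference for the fiber, your long-exact-sequence derivation of (4) does go through.
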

We recall the definition of the map $\partial_1$.
Let $\omega:[0,1]\to \mathcal{O}_f(f)$, $\omega_0 = \omega_1$ be a loop in $\mathcal{O}_f(f)$ based in $f$.
Since $p$ is a Serre fibration, it follows that there exists an isotopy $h:M\times [0,1]\to M$ such that $\omega_t = f\circ h_t$, $h_0 = \id$, and $h_1\in \mathcal{S}'(f)$, \ie, $h_1$ is such that $f\circ h_1 = f$.
Then the map $\partial_1$ is defined by the formula $\partial([\omega]) = [h_1]\in \pi_0\mathcal{S}'(f)$.

\subsection{Automorphisms of graphs of functions on surfaces}\label{sec:Aut-graphs}
Let $f:M\to \RRR $ be a Morse function on a smooth compact oriented surface $M$ and $c$ be a real number.
A connected component $C$ of the level-set $f^{-1}(c)$ is called {\it critical}, if $C$ contains at most one critical point of $f$, otherwise $C$ is {\it regular}.
Let $\Xi$ be a partition of $M$ into connected components of level-sets of $f$.
It is well known that the quotient-space $\KR_f = M/\Xi$ has a structure of a $1$-dimensional CW complex called {\it the graph of $f$, or Kronrod-Reeb graph} of $f$.
Let also $p_f:M\to \Gamma_f$ be a projection map.
Then $f$ can be represented as the composition:
\[
\xymatrix{
	f = \widehat{f} \circ p_f:& M \ar[r]^-{p_f} & \KR_f \ar[r]^-{\widehat{f}} & \RRR.
}
\]
Denote by $\Aut(\KR_f)$ the group of homeomorphisms of the graph $\KR_f$.
Note that each $h\in \Stab(f,X)$ preserves level-sets of $f$.
Hence, $h$ induces the homeomorphism $\rho(h)$ of $\KR_f$ such that the following diagram
\[
\xymatrix{
	M \ar[rr]^-{p_f} \ar[d]_h && \KR_f \ar[rr]^-{\widehat{f}} \ar[d]_{\rho(h)} && \RRR \ar@{=}[d] \\
	M \ar[rr]^-{p_f}          && \KR_f \ar[rr]^-{\widehat{f}}                  && \RRR
}
\]
commutes, and the correspondence $h\mapsto \rho(h)$ is a homomorphism \[\rho:\Stab(f,X)\to \Aut(\KR_f).\]
One can check that the image $\rho(\Stab(f,X))$ is a finite subgroup in $\Aut(\KR_f)$.
The image $\rho(\Stab'(f,X))$ in $\Aut(\KR_f)$ will be denoted by $\Gstab(f,X)$.

Let $\Delta(f,X)$ be the normal subgroup of $\mathcal{S}(f,X)$ consisting of diffeomorphisms which leave invariant every connected component of each level set of $f$, and $\Delta'(f,X)$ be the following intersection $\Delta(f,X)\cap \mathcal{D}_{\id}(M,X)$.
It is known that $\pi_0\Delta'(f,X)$ is a free abelian group and $\ker\rho = \pi_0\Delta'(f,X)$. So the following sequence of groups is exact
\begin{equation}
	\label{eq:S'(f)}
	\xymatrix{
		\pi_0\Delta'(f,X) \ar@{^{(}->}[r]^-{j_0} & 	\pi_0\mathcal{S}'(f,X) \ar@{->>}[r]^-{\rho} & G(f,X),
	}
\end{equation}
see~\cite[Section~4]{Maksymenko:DefFuncI:2014}.
From \cite[Theorem 5.2]{Maksymenko:ProcIM:ENG:2010}, we have another short exact sequence
\begin{equation}
	\label{eq:G-sec}
	\xymatrix{
		\pi_1\mathcal{D}_{\id}(M,X)\times \pi_0\Delta'(f,X) \
		\ar@{^{(}->}[r]^-{\iota_1} &
		\ \pi_1\mathcal{O}_f(f,X) \
		\ar@{->>}[r]^-{\rho\circ \partial_1} &
		\ G(f,X),
	}
\end{equation}
in which $\iota_1$ is defined as follows.
Let $\alpha\in\pi_1\mathcal{D}_{\id}(M,X)$ be an element represented by some loop $\{h^t:M \to M\}_{t\in[0,1]}$ in $\mathcal{D}_{\id}(M,X)$ such that $h^0=h^1=\id_{M}$, and $\phi\in \Delta'(f,X)=\Delta(f) \cap \Diff_{\id}(M,X)$.
Fix any isotopy $\{\phi^t:M\to M\}_{t\in[0,1]}$ between $\id_{M} = \phi^0$ and $\phi = \phi^1$.
Then
\[
\iota_1(\alpha,\phi) = [ f \circ h^t \circ \phi^t] \in \pi_1\mathcal{O}_f(f,X).
\]

%  is a loop representin some given by the formula:
% \[
%     \iota_1\bigl( \{h^t\}_{t\in[0,1]}, \phi \bigr) =  \{ f \circ h^t \circ \phi^t \}_{t\in[0,1]}
% \]
% for a loop $\{h^t\}_{t\in[0,1]}$ in $\pi_1\mathcal{D}_{\id}(M,X)$, at $\id_{}
%     \|_{\pi_1\mathcal{D}_{\id}(M,X)}(h) = p_1(h)$, $\iota_1|_{\pi_0\Delta'(f,X)}(h) = [f\circ h^t]$, and $h^t:M\to M$, $t\in [0,1]$ is an isotopy between $h^1 = h$ and $h^0 = \id_{M}$ fixed on $X$.
% where $\iota_1|_{\pi_1\mathcal{D}_{\id}(M,X)}(h) = p_1(h)$, $\iota_1|_{\pi_0\Delta'(f,X)}(h) = [f\circ h^t]$, and $h^t:M\to M$, $t\in [0,1]$ is an isotopy between $h^1 = h$ and $h^0 = \id_{M}$ fixed on $X$.

\subsection{Main diagram}
Thus for a given Morse function $f$ on a smooth compact oriented surface $M$ we considered several spaces associated with $f$.
If  $M\neq S^2$, then all non-trivial homotopy information is encoded in the following commutative diagram:
\begin{equation}\label{diag:main-diag}
	\begin{gathered}
		\xymatrix{
			\pi_1\mathcal{D}_{\id}(M) \times \pi_0\Delta'(f) \ar@{->>}[rr]^{\mathrm{pr}_2} \ar@{->>}[d]_{\mathrm{pr}_1} \ar@{^{(}->}[rd]^{\iota_1}&&\pi_0\Delta'(f) \ar@{^{(}->}[d]_{j_0}\\
			\pi_1\mathcal{D}_{\id}(M) \ar@{^{(}->}[r]^{p_1} & \pi_1\mathcal{O}_f(f) \ar@{->>}[r]^{\partial_1} \ar@{->>}[rd]_{\rho\circ\partial_1}& \pi_0\mathcal{S}'(f) \ar@{->>}[d]_{\rho}\\
			&& G(f),
		}
	\end{gathered}
\end{equation}
where $\mathrm{pr}_1$ and $\mathrm{pr}_2$ are projections on the first and the second factor.
In diagram~\eqref{diag:main-diag} horizontal, vertical and diagonal sequences coincide with sequences~\eqref{eq:pi1-main}, \eqref{eq:S'(f)}, and~\eqref{eq:G-sec} respectively.

Let
\begin{gather}
	\label{eq:fst-seq}
	\xymatrix@1{A_1 \ \ar@{^{(}->}[r]^-{i_1} & \ A_2 \ \ar@{->>}[r]^-{p_1} & \ A_3}, \\
	% \end{equation}
	%  and
	%  \begin{equation}
	\label{eq:scd-seq}
	\xymatrix@1{B_1 \ \ar@{^{(}->}[r]^-{i_2} & \ B_2 \ \ar@{->>}[r]^-{p_2} & \ B_3}
\end{gather}
be two exact sequences of groups.
Recall that sequences~\eqref{eq:fst-seq} and~\eqref{eq:scd-seq} are \myemph{isomorphic}  if there exist isomorphisms $\phi = \{\phi_i:A_i\to B_i$, $i = 1,2,3\}$ such that the following diagram commutes
\[
\xymatrix{
	A_1\ar@{^{(}->}[r]^{i_1} \ar[d]^{\phi_1}& A_2 \ar@{->>}[r]^{p_1} \ar[d]^{\phi_2}& A_3 \ar[d]^{\phi_3}\\
	B_1\ar@{^{(}->}[r]^{i_2} & B_2 \ar@{->>}[r]^{p_2} &B _3.
}
\]
Similarly one can define the notion of an isomorphism for commutative diagrams.

The main aim of the paper is to describe diagram~\eqref{diag:main-diag} for Morse functions on $2$-torus up to an isomorphism.

\subsection{Acknowledgments} The author is grateful to Sergiy Maksymenko for useful discussions.

\subsection{Structure of the paper}
Section~\ref{sec:wreath-product} collects definitions of wreath products, which we need to state our main result, Theorem~\ref{thm:main}.
We recall some known results about Morse functions on $2$-torus and their graphs in Section~\ref{sec:function-graphs}, and the fundamental groups of such functions is described in Section~\ref{sec:Orbits}.
Section~\ref{sec:prelim} contains some facts needed for the proofs of our results, and we will prove Theorem \ref{thm:main} in Sections~\ref{sec:proof-1} and~\ref{sec:proof-2}.

\section{Wreath products}\label{sec:wreath-product}
To state our results we need special kinds of wreath products of groups with cyclic groups which we describe below.
Let $G$ be a group and $n,m\geq 1$ be integers.
We will consider the following wreath products:
\begin{itemize}[itemsep=1ex]
	\item $G\wr_n\ZZZ:= G^n\rtimes_{\alpha} \ZZZ$,
	\item $G\wr \ZZZ_n:= G^n\rtimes_{\beta}\ZZZ_n$,
	\item $G\wr_{n,m}\ZZZ^2:= G^{{nm}}\rtimes_{\gamma}\ZZZ^2$,
	\item $G\wr (\ZZZ_n\times \ZZZ_m):= G^{{nm}}\rtimes_{\delta} (\ZZZ_n\times \ZZZ_m)$,
\end{itemize}
where $\alpha:G^n\times\ZZZ\to G^n$ and $\beta:G\times \ZZZ_n\to G^n$ correspond to a non-effective $\ZZZ$-action and an effective $\ZZZ_n$-action on $G^n$ by cyclic shifts of coordinates defined by formulas:
\begin{align*}
	\bigl( (g_i)_{i = 0}^{n-1},a\bigr)  & \stackrel{\alpha}{\longmapsto} (g_{i+a})_{i = 0}^{n-1}, &
	\bigl( (g_i)_{i = 0}^{n-1},b \bigr) & \stackrel{\beta}{\longmapsto}  (g_{i+b})_{i = 0}^{n-1},
\end{align*}
where all indexes are taken modulo $n$, $g_i\in G$, $a\in \ZZZ$, $b\in \ZZZ_n$.
Similarly $\gamma:G^{{\xxnm}}\times \ZZZ^2\to G^{{\xxnm}}$ and $\delta:G^{{\xxnm}}\times (\ZZZ_n\times \ZZZ_m)\to G^{{\xxnm}}$ correspond to a non-effective $\ZZZ^2$-action and an effective $\ZZZ_n\times\ZZZ_m$-action on $G^{{\xxnm}}$ by cyclic shifts of the corresponding coordinates defined by formulas
\begin{align*}
	\bigl( (g_{ij})_{i,j = 0}^{n-1, m-1}, (a,b)   \bigr) & \stackrel{\gamma}{\longmapsto} (g_{i+a,j+b})_{i,j = 0}^{n-1, m-1}, \\
	\bigl( (g_{ij})_{i,j = 0}^{n-1, m-1}, (a',b') \bigr) & \stackrel{\delta}{\longmapsto} (g_{i+a',j+b'})_{i,j = 0}^{n-1, m-1},
\end{align*}
where the indexes $i$ and $j$ takes modulo $n$ and $m$ respectively, $(a,b)\in \ZZZ^2$, $(a', b')\in \ZZZ_n\times\ZZZ_m$.

So $G\wr_n\ZZZ$ and $G\wr\ZZZ_n$ are direct products of sets $G^n\times \ZZZ$ and $G^n\times \ZZZ_n$ with the following multiplications
\begin{align*}
	(g,a)\cdot (g',a') &= (\alpha(g, a')g', a+a'), &
	(g,a)\cdot (g',a') &= (\beta(g, b')g', b+b'),
\end{align*}
for $g,g'\in G^n$, $a,a'\in\ZZZ$, and $b,b'\in\ZZZ_n$.
Similarly $G\wr_{n,m}\ZZZ^2$ and $G\wr (\ZZZ_n\times\ZZZ_m)$ are direct products of sets $G^{nm}\times\ZZZ^2$ and $G^{nm}\times\ZZZ_n\times\ZZZ_m$ respectively with multiplications
\begin{align*}
	(g, (a,b))\cdot(g', (a',b')) &= \bigl(\gamma(g, a',b')g', (a + a', b+ b')\bigr),\\
	(g, (c,d))\cdot(g', (c',d')) &= \bigl(\gamma(g, c',d')g', (c + c', d+ d')\bigr)
\end{align*}
for $g,g'\in G^{{\xxnm}}$, $a,a'b,b'\in\ZZZ$, $c,c'\in\ZZZ_n$, and $d,d'\in\ZZZ_m$.

The general definition of wreath product and its properties the reader can find in~\cite{Meldrum:Longman:1995}.

\section{Main result}\label{sec:main-result}
Let $f$ be a Morse function on $T^2$, $\KR_f$ be its graph, and $p_f:T^2\to \KR_f$ be the projection map induced by $f$.
\begin{lemma}
	The map $p_f^*:\pi_1 T^2\to \pi_1 \Gamma_f$ induced by $p_f$ is an epimorphism with a nonzero kernel.
	Hence the graph of $f$ is either a tree, or has a unique circuit.
\end{lemma}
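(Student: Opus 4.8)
The plan is to isolate the one substantive step—surjectivity of $p_f^*$—and then extract both conclusions (nonzero kernel, and the tree/unique-circuit dichotomy) purely from the algebra of the groups involved, exploiting that $\pi_1 T^2\cong\ZZZ^2$ is abelian while $\pi_1\KR_f$ is free.

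For surjectivity I would argue by lifting loops. Recall that $\KR_f$ is a finite graph whose open edges correspond to $1$-parameter families of regular level circles: over each open edge $e$ the preimage $p_f^{-1}(e)$ is a cylinder $S^1\times(0,1)$ foliated by these circles (by the regular interval theorem), while over each vertex $v$ the fiber $p_f^{-1}(v)$ is a single, hence connected, component of a level set. Every class of $\pi_1\KR_f$ is represented by a closed edge-path $e_1 e_2\cdots e_k$. Over each $e_i$ I choose a path $\sigma_i$ in the cylinder $p_f^{-1}(\overline{e_i})$ for which $p_f\circ\sigma_i$ traverses $e_i$ once (e.g.\ an arc meeting each level circle in a point). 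At a vertex $v_i$ separating $e_i$ from $e_{i+1}$, the terminal point of $\sigma_i$ and the initial point of $\sigma_{i+1}$ both lie on the connected set $p_f^{-1}(v_i)$, so they can be joined by a path inside this fiber, and such a bridging path projects to the constant path at $v_i$. Concatenating the $\sigma_i$ with the bridging paths yields a loop $\widetilde\gamma$ in $T^2$ whose image $p_f\circ\widetilde\gamma$ is, up to reparametrization, exactly the chosen edge-path. Hence $p_f^*[\widetilde\gamma]$ equals the given class, and $p_f^*$ is onto.

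Granting surjectivity, the image of $p_f^*$ is all of $\pi_1\KR_f$, but it is also a quotient of the abelian group $\pi_1 T^2\cong\ZZZ^2$, hence abelian. Since $\KR_f$ is a graph, $\pi_1\KR_f$ is a free group, and a free group is abelian only in ranks $0$ and $1$. Thus $\pi_1\KR_f$ is trivial or infinite cyclic, which is precisely the statement that $\KR_f$ is a tree or has a single independent circuit. For the kernel, compare ranks: $\pi_1 T^2$ has rank $2$ while its image has rank at most $1$, so by additivity of rank in the exact sequence $\ker p_f^*$ has rank at least $1$, and in particular is nonzero.

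The routine points—that $p_f^{-1}(e)$ is a product cylinder and that the bridging paths project to constants—are straightforward. The only genuine content is surjectivity, and within it the delicate bookkeeping is at the vertices: one must check that joining successive lifts inside the connected fibers alters the projected loop only by constant subpaths, so that $p_f\circ\widetilde\gamma$ is honestly homotopic to the original edge-path. Once this is secured, the rank and freeness conclusions are immediate.
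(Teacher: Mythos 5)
Your proposal is correct, but it gets the key step—surjectivity of $p_f^*$—by a genuinely different route than the paper. The paper's proof rests on one unproved claim: there is a map $s:(\Gamma_f,p_f(x))\to(T^2,x)$ with $p_f\circ s$ homotopic to $\id_{\Gamma_f}$ rel the base point; from $p_f^*\circ s_*=\id$ it extracts both surjectivity of $p_f^*$ and injectivity of $s_*$, and then concludes that $\pi_1\Gamma_f=F_k$ is abelian because it is a \emph{subgroup} of $\ZZZ^2$. You instead lift each edge-loop separately (arcs across the cylinders over open edges, bridges inside the connected vertex fibers), which yields surjectivity alone, and you conclude that $\pi_1\Gamma_f$ is abelian because it is a \emph{quotient} of $\ZZZ^2$; the dichotomy $k\in\{0,1\}$ and the nonzero kernel (via additivity of rank in $2=\mathrm{rank}\,\ker p_f^*+k$) then follow just as in the paper, where the kernel statement is simply asserted. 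The two constructions are cousins—the paper's section $s$ would be built from exactly the same choices of arcs and bridges—but your version needs strictly weaker input: it avoids verifying the global homotopy $p_f\circ s\simeq\id_{\Gamma_f}$, which is precisely the ``easy to see'' step the paper leaves to the reader, and it generalizes verbatim to any Reeb-type quotient with connected fibers. What it gives up is the stronger conclusion, implicit in the paper's argument, that $\pi_1\Gamma_f$ embeds in $\pi_1T^2$ as a retract—unneeded for this lemma. One small point that makes your vertex bookkeeping even cleaner: since $\widehat{f}$ is strictly monotone along every edge of $\Gamma_f$, no edge is a loop, so each closed edge $\overline{e_i}$ is an arc and the projected concatenation differs from the chosen edge-path only by inserted constant paths, exactly as you claim.
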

\begin{proof}
	Let $x\in T^2$ be an arbitrary point.
	Note that $\pi_1(\Gamma_f, p_f(x)) = F_k$, where $F_k$ is a free groups of some rank $k$ equals the number of independent cycles in $\Gamma_f$, and $\pi_1(T^2, x)$ is isomorphic to $\ZZZ^2$.
	It is easy to see that there exists a map $s:(\Gamma_f, p_f(x))\to (T^2, x)$ such that the composition $p_f\circ s$ is homotopic to the identity map $\id_{\Gamma_f}$ relatively base point.
	Then the composition
	\[
	\xymatrix{
		\pi_1 (\Gamma_f, p_f(x)) \ar[r]^-{s*} & \pi_1 (T^2, x) \ar[r]^-{p_f^*} & \pi_1 (\Gamma_f, p_f(x))
	}
	\]
	is the identity isomorphism.
	
	Moreover $p_f^*$ is a surjective map, and $\pi_1 (\Gamma_f, p_f(x)) = F_k$ is a subgroup of the group $\pi_1 (T^2, x)=\ZZZ^2$.
	Since $\pi_1 (T^2,x)$ is commutative, it follows that $F_k$ as a subgroup of $\pi_1 (T^2, x)$ is also a commutative group.
	This is possible only when $k = 0$ or $k = 1$.
	So $\Gamma_f$ is either a tree if $k = 0$, or $\Gamma_f$ has a unique circuit if $k = 1$.
	In both of these cases $p_f^*$ has a nonzero kernel.
\end{proof}
\noindent Denote by $\mathscr{F}_0$ the class of Morse functions on $T^2$ whose graphs are trees, and by $\mathscr{F}_1$ the class of Morse functions on $T^2$ whose graphs contain circuits\footnote[1]{Thus the index $i$ in $\mathscr{F}_i$ refers to the rank of homology group $H_1(\Gamma_f,\ZZZ)$.}.

Our main result is the following theorem.

\begin{theorem}\label{thm:main}
	Let $f$ be a Morse function on $T^2$ and $\Gamma_f$ be its graph.
	\begin{enumerate}[wide, label={\rm(\arabic*)}]
		\item\label{enum:thm:main:1}
		Assume that $f$ belongs to $\mathscr{F}_0$.
		Then there exists a set of mutually disjoint $2$-disks $\mathbb{D} = \{D_i \}_{i= 1}^r\subset T^2$ for some $r\geq 1$ such that each restriction $f|_{D_i}:D_i\to\RRR$, $i = 1,\ldots, r$ is also a Morse function, and the diagram~\eqref{diag:main-diag} is isomorphic to
		
		\begin{equation}\label{diag:main-diag-tree}
			\begin{gathered}
				\xymatrix{
					\pi_1\mathcal{D}_{\id}(T^2) \times (\Delta_{\mathbb{D}})^{{\xxnm}n} \ar@{->>}[rr]^{pr_2} \ar@{->>}[d]_{pr_1} \ar@{^{(}->}[rd]^{\iota_1}&& (\Delta_{\mathbb{D}})^{{\xxnm}n} \ar@{^{(}->}[d]^{j_0}\\
					\pi_1\mathcal{D}_{\id}(T^2) \ar@{^{(}->}[r]^{p_1} & \mathcal{S}_{\mathbb{D}}\wr_{n, \xxnm} \ZZZ^2 \ar@{->>}[r]^{\partial_1} \ar@{->>}[rd]_{\partial_1\circ \rho}& \mathcal{S}_{\mathbb{D}}\wr(\ZZZ_{n}\times \ZZZ_{\xxnm}) \ar@{->>}[d]_{\rho}\\
					&& G_{\mathbb{D}}\wr (\ZZZ_{n}\times \ZZZ_{\xxnm})
				}
			\end{gathered}
		\end{equation}
		for some $n,m\in \NNN$, where
		\begin{align*}
			\Delta_{\mathbb{D}}      \! &=\!  \prod_{i = 1}^r \pi_0\Delta'(f|_{D_i},\partial D_i), &
			\mathcal{S}_{\mathbb{D}} \! &=\!  \prod_{i = 1}^r \pi_0\mathcal{S}'(f|_{D_i}, \partial D_i), &
			G_{\mathbb{D}}           \! &=\!  \prod_{i = 1}^r G(f|_{D_i}).
		\end{align*}
		
		\item\label{enum:thm:main:2}
		Assume that $f$ belongs to $\mathscr{F}_1$.
		Then there exists a set of mutually disjoint $2$-disks $\mathbb{Y} = \{Y_{ij}\}_{i = 0,\ldots, k}^{j = 0,\ldots, c_i}\subset T^2$ for certain $k,c_i\in \NNN$, and $m_i\in \NNN$, $i = 0,\ldots, k-1$, such that diagram~\eqref{diag:main-diag} is isomorphic to
		\[
		\xymatrix{
			\pi_1\mathcal{D}_{\id}(T^2)\times (\Delta_{\mathbb{Y}}^n )/H \ar@{->>}[rr]^{pr_2} \ar@{->>}[d]_{pr_1} \ar@{^{(}->}[rd]^{\iota_1}&& (\Delta_{\mathbb{Y}}^n)/H\ar@{^{(}->}[d]^{j_0}\\
			\pi_1\mathcal{D}_{\id}(T^2) \ar@{^{(}->}[r]^{p_1} & \mathcal{S}_{\mathbb{Y}}\wr_{n} \ZZZ \ar@{->>}[r]^{\partial_1} \ar@{->>}[rd]_{\partial_1\circ \rho} & (\mathcal{S}_{\mathbb{Y}}\wr \ZZZ_n)/H \ar@{->>}[d]_{\rho}\\
			&& G_{\mathbb{Y}}\wr \ZZZ_n,
		}
		\]
		for some $n\in \NNN$,
		where
		\begin{align*}
			\Delta_{\mathbb{Y}} &= \prod_{i = 1}^k ( (\prod_{j = 1}^{c_i} \Delta_{Y_{ij}})^{m_i} \times m_i\ZZZ), &
			\Delta_{Y_{ij}}     &= \pi_0\Delta'(f|_{Y_{ij}}, \partial Y_{ij}), \\
			%%%%%%%%%
			\mathcal{S}_{\mathbb{Y}} &= \prod_{i = 1}^k ( (\prod_{j = 1}^{c_i} \mathcal{S}_{Y_{ij}} ) \wr_{m_i}\ZZZ), &
			\mathcal{S}_{Y_{ij}}     &= \pi_0\mathcal{S}'(f|_{Y_{ij}}, \partial Y_{ij}), \\
			%%%%%%%%%
			G_{\mathbb{Y}} &= \prod_{i = 1}^k ( (\prod_{j = 1}^{c_i} G_{Y_{ij}} ) \wr\ZZZ_{m_i}), &
			G_{Y_{ij}}     &= G(f|_{Y_{ij}}, \partial Y_{ij}),
		\end{align*}
		and the group $H$ is a normal subgroup of $\Delta_{\mathbb{Y}}^n$ isomorphic to $\ZZZ$ and generated by the element
		\begin{equation}\label{eq:Gars}
			(\underbrace{(E_1, m_1,E_2,m_2,\ldots, E_k, m_k), \ldots, (E_1, m_1,E_2,m_2,\ldots, E_k, m_k)}_{n}),
		\end{equation}
		where
		$E_i$ is the unit of the group $(\prod_{j = 1}^{c_i} \Delta_{Y_{ij}})^{m_i}$, $i = 0,1\ldots, k$.
		The group $H$ is contained in the center of $\mathcal{S}_{\mathbb{Y}}\wr_n\ZZZ$ via  ${j_0}(h) = (h,0)$ for $h\in H$.
	\end{enumerate}
\end{theorem}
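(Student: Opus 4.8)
The plan is to reduce the whole statement to the identification of $\pi_0\mathcal{S}'(f)$ and $G(f)$ together with the central extension that produces $\pi_1\mathcal{O}_f(f)$. Since the identity component $\mathcal{D}_{\id}(T^2)$ is homotopy equivalent to $T^2$, we have $\pi_1\mathcal{D}_{\id}(T^2)\cong\ZZZ^2$, and by Theorem~\ref{thm:homotopy-orbits}\,(3) its image $p_1(\pi_1\mathcal{D}_{\id}(T^2))$ is central in $\pi_1\mathcal{O}_f(f)$ and equals $\ker\partial_1$. Thus $\pi_1\mathcal{O}_f(f)$ is a central extension of $\pi_0\mathcal{S}'(f)$ by $\ZZZ^2$, and the two exact sequences~\eqref{eq:S'(f)} and~\eqref{eq:G-sec}, together with the explicit formulas for $j_0$, $\iota_1$, $\rho$ and $\partial_1$ recalled in Section~\ref{sec:Aut-graphs}, will assemble the full diagram~\eqref{diag:main-diag}. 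So the real content is to compute $\pi_0\mathcal{S}'(f)$ and $\pi_0\Delta'(f)=\ker\rho$ as the stated wreath products, to compute $G(f)=\rho(\mathcal{S}'(f))$, and to pin down the central $\ZZZ^2$.

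The geometric heart is a decomposition of $T^2$ adapted to $\Gamma_f$, obtained by cutting along an invariant family of regular level-set components. When $f\in\mathscr{F}_0$ the Lemma gives $\pi_1\Gamma_f=0$, so both generators of $\pi_1 T^2$ are carried into the fibres of $p_f$; I would exploit this to build a doubly-periodic decomposition of $T^2$ into isomorphic blocks, a fundamental one of which carries the disks $D_1,\dots,D_r$ together with annular connectors, and on whose set of blocks a finite abelian group $\ZZZ_n\times\ZZZ_{nm}$ acts freely by the two independent shifts. When $f\in\mathscr{F}_1$ the unique circuit of $\Gamma_f$ is preserved setwise by every induced automorphism; cutting along one regular circle lying over this circuit turns $T^2$ into a cylinder carrying only a single longitudinal $\ZZZ$-periodicity, and a further cut yields the blocks $\{Y_{ij}\}$, where the inner wreath $\wr_{m_i}\ZZZ$ records an $m_i$-fold rotational symmetry of the $i$-th cylinder and the factors $m_i\ZZZ$ in $\Delta_{\mathbb{Y}}$ come from Dehn-twist-type elements along the circuit circles. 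On each disk the groups $\pi_0\Delta'(f|_{D_i},\partial D_i)$, $\pi_0\mathcal{S}'(f|_{D_i},\partial D_i)$ and $G(f|_{D_i})$ serve as atoms, taken from the known planar computations.

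With the decomposition in place the assembly is algebraic. An element of $\mathcal{S}'(f)$ splits into a block-preserving part, contributing the direct-product bases $\Delta_{\mathbb{D}},\mathcal{S}_{\mathbb{D}},G_{\mathbb{D}}$ (resp. $\Delta_{\mathbb{Y}},\mathcal{S}_{\mathbb{Y}},G_{\mathbb{Y}}$), followed by a shift permuting the blocks, contributing the effective cyclic top group $\ZZZ_n\times\ZZZ_{nm}$ (resp. $\ZZZ_n$); this gives $\pi_0\mathcal{S}'(f)$ its wreath-product form, and $\rho$ is the quotient of the bases by their $\Delta$-factors. To recover $\pi_1\mathcal{O}_f(f)$ I replace the effective top group by the corresponding non-effective $\ZZZ^2$- (resp. $\ZZZ$-) action: the natural surjections $G\wr_{n,nm}\ZZZ^2\twoheadrightarrow G\wr(\ZZZ_n\times\ZZZ_{nm})$ and $G\wr_n\ZZZ\twoheadrightarrow G\wr\ZZZ_n$ have kernels $n\ZZZ\times nm\ZZZ\cong\ZZZ^2$ and $n\ZZZ\cong\ZZZ$. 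In the tree case one checks that this $\ZZZ^2$ coincides with $p_1(\pi_1\mathcal{D}_{\id}(T^2))$ by matching the two shift generators with the images of the two standard loops in $\mathcal{D}_{\id}(T^2)\simeq T^2$, which finishes that diagram; commutativity then follows from the recalled formulas.

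Two points are the main obstacles. First, one must prove that $G(f)$ is exactly the claimed cyclic group and no larger, i.e. that the only automorphisms of $\Gamma_f$ induced by diffeomorphisms isotopic to the identity are those compatible with the projection $p_f:T^2\to\Gamma_f$; this is what forces the block decomposition to be genuinely periodic. Second, in the cycle case the kernel $n\ZZZ$ above accounts for only one $\ZZZ$ of the central $\ZZZ^2$: the missing generator is the monodromy around the circuit, which transports a block once around the cylinder to its shift by the element~\eqref{eq:Gars}. I would identify this monodromy with the subgroup $H\cong\ZZZ$, show that $H$ is central in $\mathcal{S}_{\mathbb{Y}}\wr_n\ZZZ$ with $j_0(h)=(h,0)$, and verify that $\langle H, n\ZZZ\rangle\cong\ZZZ^2$ is precisely $p_1(\pi_1\mathcal{D}_{\id}(T^2))=\ker\partial_1$, so that $\pi_0\mathcal{S}'(f)=(\mathcal{S}_{\mathbb{Y}}\wr\ZZZ_n)/H$ and $\pi_0\Delta'(f)=(\Delta_{\mathbb{Y}}^n)/H$. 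Tracking a through-going loop of $T^2$ explicitly through the Serre fibration of Theorem~\ref{thm:homotopy-orbits}\,(1) is how I would establish this monodromy relation, and I expect it to be the most delicate step.
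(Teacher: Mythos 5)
Your proposal is correct and follows essentially the same route as the paper: identify $\pi_1\mathcal{O}_f(f)$ with the wreath products $\mathcal{S}_{\mathbb{D}}\wr_{n,nm}\ZZZ^2$ (tree case) and $\mathcal{S}_{\mathbb{Y}}\wr_{n}\ZZZ$ (cycle case), locate the images of $\pi_1\mathcal{D}_{\id}(T^2)$ and $\pi_0\Delta'(f)$ inside them (this is Lemma~\ref{lm:ZZ} and its cycle-case analogue), and obtain $\pi_0\mathcal{S}'(f)$ and $G(f)$ as the corresponding quotients, your ``monodromy around the circuit'' being exactly the paper's central subgroup $H$ generated by \eqref{eq:Gars}, which arises from the slide $\theta$ through the identity $p_1(\mathbb{M})=[f\circ\theta]$. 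The only substantive difference is organizational: the block decompositions and the monodromy relation that you propose to re-derive by tracking loops through the Serre fibration are imported by the paper from prior work (Theorem~\ref{thm-orbits-t2} and Lemma~\ref{lm:func-cylinder}), its own new technical ingredient being Theorem~\ref{thm:H-kerj0}, the splitting $\pi_0\Delta'(f,\mathsf{V})\cong\langle\theta\rangle\times\pi_0\Delta'(f)$, which justifies $\pi_0\Delta'(f)\cong\Delta_{\mathbb{Y}}^{n}/H$.
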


\begin{remark}\label{rem:general}
	\begin{enumerate}[wide]    
		\item\label{rem:general:1}
		It is known that the inclusion $T^2\hookrightarrow \mathcal{D}_{\id}(T^2)$ is a homotopy equivalence, see \cite{EarleEells:DG:1970,Gramain:ASENS:1973}. So, the group $\pi_1\mathcal{D}_{\id}(T^2)$ is isomorphic to $\ZZZ^2$.
		
		\item\label{rem:general:2}
		S. Maksymenko showed that the group $\pi_1\mathcal{O}_f(f)$ is a subgroup of the braid group of $T^2$, see for example \cite[Theorem 1.1]{Maksymenko:DefFuncI:2014}.
		The element \eqref{eq:Gars} can be regarded as a certain analogue of the Garside element in this braid group.
		
		\item\label{rem:general:3}
		In a series of papers~\cite{MaksymenkoFeshchenko:2014:HomPropCycle, MaksymenkoFeshchenko:2014:HomPropTree, MaksymenkoFeshchenko:2015:HomPropCycleNonTri, Feshchenko:2014:HomPropTreeNonTri} S.~Maksymenko and the author described groups $\pi_1 \mathcal{O}_f(f)$ for functions on $2$-torus.
		We shortly review these results in Section~\ref{sec:Orbits}.
		
		\item\label{rem:general:4}
		Since the structure of groups $\pi_1\mathcal{D}_{\id}(T^2)$ and $\pi_1\mathcal{O}_f(f)$ are known, it follows that for description of diagram \eqref{diag:main-diag} we need to find a structure of groups $\pi_0\mathcal{S}'(f)$, $G(f)$, and $\pi_0\Delta'(f)$.
	\end{enumerate}
\end{remark}

\section{Functions on $2$-torus, their graphs, and homotopy properties}\label{sec:function-graphs}
In this section we give a short overview of known results about graphs of smooth functions on $2$-torus, see \cite{MaksymenkoFeshchenko:2014:HomPropTree, MaksymenkoFeshchenko:2014:HomPropCycle, MaksymenkoFeshchenko:2015:HomPropCycleNonTri,Feshchenko:2014:HomPropTreeNonTri}.
Let $f$ be a Morse function on $T^2$, $\KR_f$ be its graph, and $p_f:T^2\to \KR_f$ be the projection map induced by $f$.

\subsection*{Case 1: Functions from $\mathscr{F}_0$}
The following lemma holds.
\begin{lemma}\label{lm:spec-vertex}{\rm\cite[Proposition 1]{MaksymenkoFeshchenko:2014:HomPropTree}.}
	Let $f$ be a function from $\mathscr{F}_0$, and $\Gamma_f$ be its graph.
	Then there exits a unique vertex $v$ in $\KR_f$ such that each component of the complement $T^2\setminus p_f^{-1}(v)$ is an open $2$-disk.
	Such a vertex $v$ of $\Gamma_f$ is called \myemph{special}.
	\qed
\end{lemma}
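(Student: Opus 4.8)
The plan is to exploit the dichotomy already in place: since $f\in\mathscr{F}_0$ the graph $\Gamma_f$ is a tree, and I would first record the structural consequence that \emph{every regular level-set component is null-homotopic in $T^2$}. Indeed, if $C=p_f^{-1}(x)$ is a regular circle then $x$ is an interior point of an edge, so $\Gamma_f\setminus\{x\}$ has two components; as $p_f$ is a quotient map with connected fibres, $T^2\setminus C$ also has two components, i.e.\ $C$ separates $T^2$. By the classification of simple closed curves on the torus, a separating simple closed curve bounds a disk, hence $C$ is contractible. Consequently, in the tree case none of the topology of $T^2$ can be carried by regular fibres, and the whole genus must be concentrated in the critical level-set components $p_f^{-1}(v)$, $v$ a vertex of $\Gamma_f$. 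This is the observation that makes a distinguished vertex plausible.

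For the bookkeeping I would use, for each vertex $v$, the decomposition $T^2=p_f^{-1}(v)\sqcup\bigsqcup_i W_i$, where the $W_i$ are the (open) components of $T^2\setminus p_f^{-1}(v)$; they correspond bijectively to the components of $\Gamma_f\setminus\{v\}$, and each $\overline{W_i}$ is a connected compact subsurface whose boundary circles lie in $p_f^{-1}(v)$ and are therefore null-homotopic in $T^2$. Additivity of the (compactly supported) Euler characteristic along this decomposition gives $\chi(T^2)=\chi(p_f^{-1}(v))+\sum_i\chi(W_i)$, and since $\chi(T^2)=0$ this constrains how much topology each $W_i$ may carry. I would then define $v$ to be \emph{special} exactly when every $W_i$ is simply connected, i.e.\ an open $2$-disk, and reduce existence to showing that one vertex ``absorbs'' the single handle of $T^2$: informally, any two disjoint non-separating curves realizing that handle are isotopic, and a tree has no cycle along which the handle could be spread out, so the non-simply-connected complementary pieces, taken over all vertices, must funnel through a single critical component. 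Identifying that vertex as $v$ and verifying that the remaining pieces are disks is the existence step.

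Uniqueness is where I expect the cleanest contradiction. Suppose $v\neq w$ were both special. Take the edge-path in the tree $\Gamma_f$ from $v$ to $w$, and let $W$ be the component of $T^2\setminus p_f^{-1}(v)$ whose subtree contains $w$. Since $v$ is special, $W$ is a disk, in particular $H_1(W)=0$. But $p_f^{-1}(w)\subset W$, and the property that $w$ is special forces $p_f^{-1}(w)$ to carry the nontrivial homology of $T^2$ (it is where the handle lives); this is incompatible with $p_f^{-1}(w)$ sitting inside a simply-connected region. Hence the single unit of genus of $T^2$ admits at most one special vertex, and together with existence this yields exactly one.

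The main obstacle, in my view, is the existence step — specifically, verifying that the complementary regions at the distinguished vertex are genuinely open $2$-disks, rather than planar surfaces with several boundary circles or with residual genus. I would treat this by a Morse-theoretic reconstruction of $T^2$ through its critical levels, tracking which attachment raises the genus, and then showing that after cutting along the one critical component responsible for the handle every remaining piece is assembled purely from disk-type attachments, so that its interior is simply connected. The two structural inputs that drive this argument are the classification of simple closed curves on $T^2$ and the acyclicity of the tree $\Gamma_f$; the former forbids essential regular fibres and the latter prevents the genus from distributing itself over more than one vertex.
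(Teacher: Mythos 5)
First, a calibration point: the paper does not prove this lemma at all --- it is quoted with a \qed from \cite{MaksymenkoFeshchenko:2014:HomPropTree}, so your attempt can only be compared with the standard argument for that proposition. Two of your three ingredients are sound. Step one (a regular fibre $C_x$ separates $T^2$ because $\Gamma_f$ is a tree, and a separating simple closed curve on $T^2$ bounds a disk) is correct. Your uniqueness argument is also essentially right and can be made rigorous in one line: if $v\neq w$ were both special, then $p_f^{-1}(w)$ lies in an open-disk component $W$ of $T^2\setminus p_f^{-1}(v)$; speciality of $w$ makes $H_1(p_f^{-1}(w))\to H_1(T^2)$ surjective (collapse the disk components of $T^2\setminus p_f^{-1}(w)$, or apply Mayer--Vietoris to a regular neighbourhood of $p_f^{-1}(w)$ plus the disks), and this surjection factors through $H_1(W)=0$, a contradiction. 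This is the same contradiction the cited proof reaches in the equivalent form ``$T^2$ is not a union of two open disks''.

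The genuine gap is the existence step, and you concede it yourself: ``the non-simply-connected complementary pieces \dots must funnel through a single critical component'' is a restatement of the claim, not an argument, and the proposed ``Morse-theoretic reconstruction tracking which attachment raises the genus'' is never carried out --- nothing in your text actually selects the vertex $v$ or proves that its complementary pieces are disks. (There is also a local slip: $\overline{W_i}$ is not a compact subsurface, since its frontier is a subgraph of the critical level rather than a union of circles, and those frontier curves are not regular fibres, so your step one does not apply to them; one must cut along regular fibres close to $v$ instead.) The missing device, which completes the proof with exactly the tools you already have, is an orientation/sink argument on the tree. For an interior point $x$ of an edge, additivity of $\chi$ shows that \emph{exactly one} component of $T^2\setminus C_x$ is an open disk (the closures of the two sides have genera summing to $1$); orient each edge of $\Gamma_f$ toward its non-disk side. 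No two edges can point away from each other: if they did, then $C_x$ would lie inside the open disk $D_y$, hence by Schoenflies it would bound a disk on its side not containing $C_y$, so both sides of $C_x$ would be disks and $\chi(T^2)=2$, which is absurd. A finite tree with such a coherent orientation has a sink $v$: follow the arrows from any edge; the walk never backtracks (the incoming edge points inward), hence is an embedded path and terminates at a vertex all of whose edges point inward. Finally, each component of $T^2\setminus p_f^{-1}(v)$ is the disk side of a regular fibre on the corresponding edge near $v$ together with a half-open collar $C_x\times(0,1]$, hence an open $2$-disk. Without some mechanism of this kind, the existence half of the lemma remains unproved in your write-up.
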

Let $v$ be a special vertex of $\KR_f$,
$\Gstab_v$ be the stabilizer of $v$ with respect to the action of the group $\Gstab(f)$ acting on $\KR_f$. Since $f$ has a unique special vertex, it follows that $G_v$ coincides with $G(f)$.  Let $\st(v)$ be a $\Gstab_v$-invariant connected neighborhood of $v$ containing no other vertices of $\KR_f$.
The set $\Gstab_v^{loc} = \{ g|_{\st(v)}\mid  g\in \Gstab_v \}$ consisting of restrictions of elements of $\Gstab_v$ onto $\st(v)$ is a subgroup of $\Aut(\st(v))$.
We will call $\Gstab_v^{loc}$ the {\it local stabilizer of $v$}.
Let also $r:\Gstab_v\to \Gstab_v^{loc}$ be the restriction map.
\begin{lemma}\label{lm:local-stab-acts}
	{\rm\cite[Theorem~2.5]{Feshchenko:2014:HomPropTreeNonTri}.}
	Let $f \in \mathscr{F}_0$, and $v$ be the special vertex of $\KR_f$.
	\begin{enumerate}[wide, label={\rm(\arabic*)}]
		\item\label{enum:lm:local-stab-acts:1}
		Then $\Gstab_v^{loc}$ is isomorphic to $\ZZZ_n\times\ZZZ_{\xxnm}$ for some $n,m\in \NNN$.
		\item\label{enum:lm:local-stab-acts:2}
		There exists a section $s:\Gstab_v^{loc}\to \Stab'(f)$ of the map $r\circ \rho$ such that $s(\Gstab_v^{loc})$ freely acts on $T^2$, so the map $q:T^2\to T^2/s(G_v^{loc})$ is a covering projection.
		Moreover the space of orbits $T^2/s(G_v^{loc})$ is also diffeomorphic to $T^2$.
	\end{enumerate}
\end{lemma}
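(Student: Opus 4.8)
The plan is to analyze the local action of $\Gstab_v$ near the special vertex $v$ and then promote an abstract group-theoretic splitting to an honest geometric action on $T^2$. For part~\ref{enum:lm:local-stab-acts:1}, I would begin by understanding the combinatorial structure of $\st(v)$. Since $v$ is the special vertex, each component of $T^2\setminus p_f^{-1}(v)$ is an open $2$-disk by Lemma~\ref{lm:spec-vertex}; dually, the edges emanating from $v$ in $\KR_f$ correspond to these complementary disks, and $\st(v)$ is a ``star'' of some finite valence. Any $g\in\Gstab_v^{loc}$ is a homeomorphism of this star fixing $v$, hence it permutes the incident edges while respecting the function values $\widehat{f}$ on $\st(v)$. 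Because $\widehat{f}$ is injective on each edge and $g$ commutes with $\widehat{f}$, the permutation of edges $g$ induces must preserve the partition of edges into those on which $\widehat{f}$ increases away from $v$ and those on which it decreases. I expect the key point to be that $\Gstab_v^{loc}$ acts freely on each of these two sets of edges (freeness coming from the fact that a nontrivial local automorphism fixing an edge pointwise would extend, via the section below, to a nontrivial diffeomorphism in $\Delta'$, contradicting the way $\rho$ detects edge-permutations), so that the action on the ``upper'' and ``lower'' edges factors through two cyclic groups, giving $\Gstab_v^{loc}\cong\ZZZ_n\times\ZZZ_m$ for the respective numbers of orbits. This identification is essentially the content of the cited \cite[Theorem~2.5]{Feshchenko:2014:HomPropTreeNonTri}, and I would quote the precise combinatorial normal form from there.

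For part~\ref{enum:lm:local-stab-acts:2}, the goal is to build a section $s:\Gstab_v^{loc}\to\Stab'(f)$ of $r\circ\rho$ whose image acts freely. The strategy is to first realize the two generators of $\ZZZ_n$ and $\ZZZ_m$ as commuting finite-order diffeomorphisms of $T^2$ lying in $\Stab'(f)$ that induce the prescribed cyclic shifts on $\st(v)$. Concretely, I would take a generator $\tau$ of $\Gstab_v^{loc}$ that cyclically permutes the upper edges and, using the disk-decomposition of $T^2\setminus p_f^{-1}(v)$, transport a fundamental collection of disks onto one another by an $f$-preserving diffeomorphism; since the disks are arranged cyclically around $v$ and are mutually diffeomorphic via $f$-level-preserving maps, one can choose this diffeomorphism to have exact finite order and no fixed points on $T^2$. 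Doing the same for the second generator and arranging the two to commute yields the section $s$, and by construction $s(\Gstab_v^{loc})$ is a finite group acting freely on $T^2$.

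Finally, freeness of the action guarantees that $q:T^2\to T^2/s(\Gstab_v^{loc})$ is a covering projection onto a closed surface, and the covering is regular with deck group $\ZZZ_n\times\ZZZ_m$. To conclude that the quotient is again $T^2$, I would compute its Euler characteristic: since the action is free, $\chi(T^2)=|\Gstab_v^{loc}|\cdot\chi(T^2/s(\Gstab_v^{loc}))$, and $\chi(T^2)=0$ forces $\chi(T^2/s(\Gstab_v^{loc}))=0$; as a finite cover of a torus is orientable and closed, the only possibility with vanishing Euler characteristic is again the torus. I anticipate that the main obstacle is part~\ref{enum:lm:local-stab-acts:2}, specifically arranging that the two cyclic generators can be realized by \emph{commuting}, \emph{exactly finite-order}, \emph{fixed-point-free} $f$-preserving diffeomorphisms simultaneously. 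Freeness is the delicate requirement: a naive realization of a cyclic shift might fix the preimage $p_f^{-1}(v)$ or some critical point, so the construction must exploit the special vertex's property that the complement is a disjoint union of disks permuted \emph{without} a global fixed region, and must average or conjugate the two generators into genuine commutation while preserving finite order. This is where I would lean most heavily on the explicit equivariant structure established in the cited reference.
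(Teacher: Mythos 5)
First, a point of comparison: the paper itself contains no proof of Lemma~\ref{lm:local-stab-acts} --- it is imported verbatim from \cite[Theorem~2.5]{Feshchenko:2014:HomPropTreeNonTri} --- so your plan can only be judged on its own terms, and on those terms it has two genuine gaps, both stemming from running the two parts in the wrong logical order.

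For part~\ref{enum:lm:local-stab-acts:1} your argument is circular and, even ignoring the circularity, does not prove the claim. You justify freeness of the $\Gstab_v^{loc}$-action on the edges of $\st(v)$ ``via the section below'', i.e.\ via part~\ref{enum:lm:local-stab-acts:2}, while your construction in part~\ref{enum:lm:local-stab-acts:2} starts from the two cyclic generators supplied by part~\ref{enum:lm:local-stab-acts:1}. Worse, the combinatorial step fails on its own: a finite group acting freely on a finite set need not be cyclic (every finite group acts freely on itself), so freeness on the ``upper'' and ``lower'' edge sets gives only an embedding of $\Gstab_v^{loc}$ into $\mathrm{Sym}(E^+)\times\mathrm{Sym}(E^-)$, not cyclicity of the two images and not a direct-product decomposition. (The divisibility normalization is not the issue --- any $\ZZZ_a\times\ZZZ_b$ is isomorphic to $\ZZZ_{\gcd}\times\ZZZ_{\mathrm{lcm}}$, hence of the form $\ZZZ_n\times\ZZZ_{\xxnm}$ --- the product-of-two-cyclics structure is.) That structure is a fact about the torus, not about stars of trees: the correct route is the reverse of yours. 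One first establishes part~\ref{enum:lm:local-stab-acts:2}; then $q:T^2\to T^2/s(\Gstab_v^{loc})$ is a regular covering of the torus by the torus (your Euler-characteristic argument for the quotient is essentially right, except that orientability of the quotient follows from the action being orientation-preserving --- the torus covers the quotient, not the other way around); and then part~\ref{enum:lm:local-stab-acts:1} is read off from covering space theory: $\Gstab_v^{loc}\cong\pi_1(T^2/s(\Gstab_v^{loc}))/q_*\pi_1(T^2)=\ZZZ^2/\Lambda$ for a finite-index sublattice $\Lambda$, which is $\ZZZ_n\times\ZZZ_{\xxnm}$ by the Smith normal form. This is exactly what diagram~\eqref{eq:cover} in the paper encodes, and it is also why the free action on the disks $D_{ijk}$ in Remark~\ref{rm:G-act} is stated as a \emph{consequence} of the free action on $T^2$.

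For part~\ref{enum:lm:local-stab-acts:2} the missing idea is the Lefschetz fixed point theorem, and without it you have made the easy step look hard while deferring the genuinely hard one. Any section $s$ of $r\circ\rho$ has image in $\Stab'(f)\subset\Diff_{\id}(T^2)$, so every $h\in s(\Gstab_v^{loc})$ acts trivially on homology and has Lefschetz number $1-2+1=0$; if such an $h\neq\id_{T^2}$ has finite order then, after averaging a Riemannian metric, it is an orientation-preserving isometry, so its fixed points are isolated of index $+1$, and $L(h)=0$ forces $\mathrm{Fix}(h)=\varnothing$. Hence \emph{any} homomorphic section acts freely automatically --- there is nothing to ``arrange'', and your worry about the invariant set $p_f^{-1}(v)$ evaporates. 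What genuinely requires work is the existence of a section that is a homomorphism, i.e.\ realizing the abstract finite group $\Gstab_v^{loc}$ by honest $f$-preserving diffeomorphisms; your requirements of ``commuting, exactly finite order'' generators are just a restatement of this, and it is precisely the point your plan defers wholesale to the cited reference rather than proves.
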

\begin{remark}\label{rm:G-act}
	\begin{enumerate}[wide, label={\rm(\roman*)}]
		\item\label{rm:G-act:i}
		Throughout the paper by $G_v^{loc}$-action on $T^2$ we will mean its action via the section $s$ from~\ref{enum:lm:local-stab-acts:2}.
		So we will write $T^2/G_v^{loc}$ instead of $T^2/s(G_v^{loc})$.
		
		\item\label{rm:G-act:ii}
		Let $V = p_f^{-1}(v)$ be a critical component of a level set of $f$ which corresponds to the special vertex $v$.
		Since $G_v^{loc}$ freely acts on $T^2$, it follows that connected components of $T^2\setminus p_f^{-1}(v)$ can be enumerated by three indexes $D_{ijk}$, $i = 1,2,\ldots, r,$ $j = 0,\ldots, n-1$, and $k = 0,\ldots, {\xxnm}-1$.
		So, if $\gamma = (a,b)\in G_v^{loc} = \ZZZ_n\times\ZZZ_{{\xxnm}}$ then
		\[ \gamma(D_{ijk}) = D_{i,\ j + a\ \mod\  n,\ k+b\ \mod\  {\xxnm}}.\]

		\item\label{rm:G-act:iii}
		The set $\mathbb{D} = \{D_{i00}\}_{i = 1}^r$ from~\ref{enum:thm:main:1} of Theorem~\ref{thm:main} will be called {\it a fundamental set} of this $G_v^{loc}$-action.
		Numbers $n,m$ and $r$ in~\ref{enum:thm:main:1} of Theorem~\ref{thm:main} are also determined from~\ref{rm:G-act:i}.
	\end{enumerate}
\end{remark}

\subsection*{Case 2: Functions from $\mathscr{F}_1$}
Let $f\in\mathscr{F}_1$ and $\Theta$ be a unique circuit in $\Gamma_f$.
Let also $C_0\subset T^2$ be a regular connected component of some level set $f^{-1}(c)$, $c\in \RRR$, such that $z = p_f(C_0)$ is the point belonging to $\Theta$.

Notice that $f^{-1}(c)$ consists of finitely many connected components and is invariant under the action of any $h\in\Stab(f)$.
Let \[ \mathcal{C}=\{h(C_0) \mid  h\in\Stab'(f) \} \]
be the set of all images of $C_0$ under the action of elements from $\Stab'(f)$.
Evidently, the curves from $\mathcal{C}$ are pairwise disjoint.
Since $C_0$ does not separate $T^2$, it follows that each $C_i$ does not separate $T^2$ as well.
Two such curves will be called {\it parallel}, and the set $\mathcal{C}$ will be called {\it parallel family on $T^2$}.

We can also assume that they are cyclically enumerated along $T^2$, so that $C_i$ and $C_{i+1}$ bound a cylinder $Q_i$ such that the interior of $Q_i$ does not intersect $\mathcal{C}$, where all indexes are taken modulo $n$.
The number $n$ from~\ref{enum:thm:main:2} of Theorem~\ref{thm:main} is the number of curves in $\mathcal{C}$;  we will call it the {\it cyclic index of $f$}.
For other details see \cite{MaksymenkoFeshchenko:2015:HomPropCycleNonTri}.

\section{Preliminaries}
\label{sec:prelim}
\subsection{Homotopy properties of Morse functions on cylinder}
\label{subsec:homot}
The following lemma holds.
\begin{lemma}\label{lm:func-cylinder}
	{\rm\cite[Theorem 5.8, Lemma 5.1 III (a)]{Maksymenko:DefFuncI:2014}.}
	Let $f$ be a Morse function on cylinder $Q = S^1\times [0,1]$.
	Then there exists a family of mutually disjoint $2$-disks $\mathbb{Y} = \{Y_{ij}\}_{i = 1,\ldots, k}^{j = 1,\ldots, c_i}\subset Q$ for some $k,c_i\in \NNN$, and $m_i\in \NNN$, $i = 1,\ldots, k$ such that
	\begin{enumerate}[wide, label={\rm(\arabic*)}, itemsep=1ex]
		\item\label{enum:lm:func-cylinder:1}
		$f|_{Y_{ij}}$ is also a Morse function for each $j = 1,\ldots, c_i$, $i = 1,\ldots, k$;
		
		\item\label{enum:lm:func-cylinder:2}
		there exists an isomorphism $\zeta = (\zeta_1, \zeta_2,\zeta_3)$ of the following short exact sequences:
		\[
		\xymatrix{
			\pi_0\Delta'(f) \ar@{^{(}->}[r]^{j_0} \ar[d]_{\cong}^{\zeta_1} & 	\pi_0\mathcal{S}'(f) \ar@{->>}[r]^{\rho} \ar[d]_{\cong}^{\zeta_2}& G(f) \ar[d]_{\cong}^{\zeta_3}\\
			\Delta_{\mathbb{Y}}\ar@{^{(}->}[r]^{j_0}& \mathcal{S}_{\mathbb{Y}} \ar@{->>}[r]^{\rho} & G_{\mathbb{Y}},
		}
		\]
		where the first row is a sequence \eqref{eq:S'(f)} with $X = \varnothing$, and
		\begin{align*}
			\Delta_{\mathbb{Y}} &= \prod_{i = 1}^k \Bigl(\bigl(\prod_{j = 1}^{c_i} \Delta_{Y_{ij}}\bigr)^{m_i} \times m_i\ZZZ \Bigr), &
			\Delta_{Y_{ij}} &= \pi_0\Delta'(f|_{Y_{ij}}, \partial Y_{ij}),  \\
			%%%%%%%%%%%
			\mathcal{S}_{\mathbb{Y}} &= \prod_{i = 1}^k \Bigl(\bigl(\prod_{j = 1}^{c_i} \mathcal{S}_{Y_{ij}} \bigr) \wr_{m_i}\ZZZ \Bigr), &
			\mathcal{S}_{Y_{ij}} &= \pi_0\mathcal{S}'(f|_{Y_{ij}}, \partial Y_{ij}), \\
			%%%%%%%%%%%
			G_{\mathbb{Y}} &= \prod_{i = 1}^k \Bigl( \bigl( \prod_{j = 1}^{c_i} G_{Y_{ij}} \bigl) \wr\ZZZ_{m_i} \Bigr), &
			G_{Y_{ij}} &= G(f|_{Y_{ij}}, \partial Y_{ij}).
		\end{align*}
		
		\item\label{enum:lm:func-cylinder:3}
		The kernel of the homomorphism $\pi_0\mathcal{S}'(f,\partial Q)\to \pi_0\mathcal{S}'(f)$ induced by the inclusion is isomorphic to $\ZZZ$ and
		is generated by the isotopy class of
		$\tau_0\circ \tau_1^{-1}$, where $\tau_i\in S'(f)$ is a Dehn twist along $S^1\times \{i\}$, $i = 0,1$.
		Moreover, in the notation of~\ref{enum:lm:func-cylinder:2}
		$[\tau_0\circ \tau_1^{-1}]$ corresponds to the element
		\begin{equation}
			\label{eq:E_k}
			(E_1,m_1,\ldots, E_k,m_k)\in \prod_{i = 1}^k \Bigl( \bigl( \prod_{j = 1}^{c_i} \Delta_{Y_{ij}}\bigr)^{m_i} \times m_i\ZZZ \Bigr),
		\end{equation}
		where $E_i$ is the unit of $(\prod_{j = 1}^{c_i} \Delta_{Y_{ij}})^{m_i}$, $i = 1,\ldots, k$,
		\cite[Lemma 5.1 III (a)]{Maksymenko:DefFuncI:2014}.
	\end{enumerate}
\end{lemma}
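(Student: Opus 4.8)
The plan is to read off all three groups directly from the Kronrod--Reeb graph $\KR_f$ of $f$ on $Q = S^1\times[0,1]$ together with the action of the stabilizer on it, reducing the global computation to local data on disks by cutting along essential level-set circles. First I would analyze $\KR_f$: since both boundary circles of $Q$ sit at constant values, each contributes an end-vertex, and every regular component of an interior level set is a circle that is either essential (isotopic to the core $S^1\times\{*\}$) or inessential (bounding a $2$-disk in $Q$). The inessential circles bound the disks $Y_{ij}$ carrying the saddles and local extrema, so $f|_{Y_{ij}}$ is automatically Morse in the sense of our definition; this yields~\ref{enum:lm:func-cylinder:1}. The essential circles are linearly ordered by height and group into $k$ families, the $i$-th family consisting of $m_i$ mutually parallel circles that an $f$-preserving diffeomorphism may cyclically permute.

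For~\ref{enum:lm:func-cylinder:2} I would build the isomorphism $\zeta$ factor by factor over these families. Within one family of $m_i$ parallel essential circles, an element of $\Stab'(f)$ either fixes each circle or cyclically shifts the $m_i$ copies; the shifts act on the $m_i$-fold product of the local stabilizers $\prod_{j}\mathcal{S}_{Y_{ij}}$ carried by the subcylinders between consecutive circles. Passing to $\pi_0$, this produces exactly the wreath factor $\bigl(\prod_{j=1}^{c_i}\mathcal{S}_{Y_{ij}}\bigr)\wr_{m_i}\ZZZ$, with the non-effective $\ZZZ$ recording Dehn twisting along the family and its reduction modulo $m_i$ being the shift. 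Taking the product over $i$ and over the three rows $(\Delta,\mathcal{S},G)$ gives the claimed decomposition, and compatibility with the exact sequence~\eqref{eq:S'(f)} follows because $\rho$ annihilates precisely the twisting directions: the $m_i\ZZZ$ summands land in $\pi_0\Delta'(f) = \ker\rho$, while the cyclic shifts survive in $G(f)$ as the factors $\ZZZ_{m_i}$.

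For~\ref{enum:lm:func-cylinder:3} I would compare the boundary-fixed and free stabilizers through the forgetful homomorphism $\pi_0\mathcal{S}'(f,\partial Q)\to\pi_0\mathcal{S}'(f)$. Its kernel consists of classes that become trivial only once the two boundary circles are allowed to move, i.e.\ of products of Dehn twists along curves parallel to $\partial Q$; since an interior boundary-parallel twist can be pushed to either end, only the \emph{difference} $\tau_0\circ\tau_1^{-1}$ of the two boundary twists survives, generating a copy of $\ZZZ$. Tracing this class through $\zeta_1$ and through the identification of each $m_i\ZZZ$ with twisting along the $i$-th essential family then matches it with the element~\eqref{eq:E_k}.

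The main obstacle is the bookkeeping in the second step: one must check that cyclic shifts and Dehn twists assemble into the precise wreath-product multiplication rather than a mere group extension, and that the section splitting off the cyclic permutations is realized by genuine $f$-preserving diffeomorphisms. This is exactly where the covering-space description of a parallel family, analogous to Lemma~\ref{lm:local-stab-acts}\,\ref{enum:lm:local-stab-acts:2}, and the known semidirect-product structure of $\Stab(f)$ from~\cite{Maksymenko:DefFuncI:2014} enter, which is why the cleanest route is to invoke those results directly rather than re-derive the multiplication rule by hand.
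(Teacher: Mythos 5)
You should first note what the paper actually does with Lemma~\ref{lm:func-cylinder}: it does not prove it at all. The lemma is imported verbatim, with attribution, from \cite[Theorem 5.8, Lemma 5.1 III (a)]{Maksymenko:DefFuncI:2014}, and serves as a black-box preliminary for the proof of Theorem~\ref{thm:main}\ref{enum:thm:main:2}. So the fallback you reach at the end of your proposal --- ``invoke those results directly rather than re-derive the multiplication rule by hand'' --- is in effect the paper's entire argument; everything you add on top of that citation is extra, and it is there that a genuine error appears.

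The error is in your geometric picture for part~\ref{enum:lm:func-cylinder:2}. You attribute the cyclic indices $m_i$ to ``families of $m_i$ mutually parallel essential circles that an $f$-preserving diffeomorphism may cyclically permute,'' with the disks $Y_{ij}$ living in the subcylinders between consecutive circles of a family. On $Q=S^1\times[0,1]$ this is impossible: disjoint essential circles are linearly ordered (each one separates $Q$ into two annuli, one containing $S^1\times\{0\}$ and one containing $S^1\times\{1\}$), and any $h\in \Stab'(f)=\Stab(f)\cap\Diff_{\id}(Q)$ preserves each boundary circle and hence preserves this linear order; since a finite chain admits no nontrivial order-preserving permutation, every essential level circle is individually invariant. (Contrast this with the $\mathscr{F}_1$ case on $T^2$, where the parallel family $\mathcal{C}$ is arranged cyclically around the torus and \emph{can} be permuted --- that phenomenon is precisely what disappears when one cuts $T^2$ to a cylinder.) The shifts in $(\prod_{j}\mathcal{S}_{Y_{ij}})\wr_{m_i}\ZZZ$ come instead from rotations of $Q$ in the angular $S^1$-direction: the disks attached to a single invariant critical level component fall into rotation orbits of length $m_i$ (with $c_i$ orbits, giving $c_i m_i$ disks per level, of which $\mathbb{Y}$ contains $c_i$ representatives), the $\ZZZ$-coordinate records the integer amount of rotation/twisting, and its reduction mod $m_i$ gives the induced permutation of disks, full turns being Dehn twists that land in $\pi_0\Delta'(f)$ together with the $m_i\ZZZ$ factors. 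Since the step you defer as ``bookkeeping'' --- realizing the cyclic shifts by genuine $f$-preserving diffeomorphisms --- would require permuting circles that are in fact each invariant, your sketch as written would fail exactly there; the algebraic conclusions you draw (where $\ker\rho$ sits, the role of $m_i\ZZZ$, and the identification of $[\tau_0\circ\tau_1^{-1}]$ with the element \eqref{eq:E_k} in part~\ref{enum:lm:func-cylinder:3}) are consistent with the cited structure theory, but they are carried by \cite{Maksymenko:DefFuncI:2014}, not by the picture you propose.
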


\subsection{Dehn twists and slides} \label{sec:Delta'(f)}
Let $\alpha, \beta:[-1,1]\to [0,1]$ be two smooth functions such that $\alpha = 0$ on $[-1,-1/2]$ and $\alpha = 1$ on $[1/2,1]$, while $\beta = 0$ on $[-1, -2/3]\cup [2/3,1]$, and $\beta = 1$ on $[-1/3, 1/3]$.

Let $Q = S^1\times [-1,1]$ be a cylinder and $C = S^1\times 0$.
Define the following two diffeomorphisms of $Q$ by the formulas
\begin{align*}
	\tau(z,t) &= (ze^{\alpha(t)}, t), &
	\theta(z,t) &= (ze^{\beta(t)}, t),
\end{align*}
for $(z,t)\in S^1\times [-1,1]$.
The diffeomorphisms $\tau$ and $\theta$ are called a {\it Dehn twist} and a {\it slide} along the curve $C$ respectively. Note that $\tau$ is fixed on some neighborhood of $\partial Q$ and $\theta$ is fixed on some neighborhood of $C\cup\partial Q$.

Let $M$ be a smooth surface and $C \subset M$ be a simple closed curve.
Suppose that $C$ is a two-sided curve, i.e., $C$ has a regular neighborhood $W$ diffeomorphic to a cylinder $Q$.
Fix any diffeomorphism $\phi:Q\to W$ such that $\phi(S^1\times 0) = C$.
Since $\tau$ is fixed on some neighborhood of $\partial Q$, it is easy to see that $\phi\circ \tau\circ \phi^{-1}:W\to W$ extends by the identity to a unique diffeomorphism $\overline{\tau}$ of $M$.
Any diffeomorphism $h:M\to M$ isotopic to $\overline{\tau}$ or $\overline{\tau}^{-1}$ will be called a \myemph{Dehn twist along $C$}.

Similarly $\phi\circ \theta\circ \phi^{-1}:W\to W$ extends by the identity map to the diffeomorphism $\overline{\theta}$ of $M$.
Any diffeomorphism $h:M\to M$ fixed on some neighborhood of $C$ supported on some cylindrical neighborhood $W$ of $C$ and isotopic to $\overline{\theta}$ or $\overline{\theta}^{-1}$ relatively to some neighborhood of $C\cup \overline{M\setminus Q}$ will be called a \myemph{slide along $C$}.
For more details see \cite{MaksymenkoFeshchenko:2015:HomPropCycleNonTri}.

\subsection{Groups $\pi_0\Delta'(f)$ and their special subgroups}
Let $f$ be a Morse function from $\mathscr{F}_1$ with cyclic index $n$, so there exists an $\mathcal{S}'(f)$-invariant family \[\cC = \{ C_i\mid  i = 0,\ldots, n-1\}\] of connected components of the same level set of $f$ and neither of those curves separate $T^2$, $i = 0,\ldots, n-1$.
We let denote by $Q_i$ the cylinder bounded by curves $C_i$ and $C_{i+1 \ \mathrm{mod} \ n}$.

A neighborhood $V$ of $C\in\cC$ will be called \myemph{$f$-adapted} if
\begin{itemize}[leftmargin=5ex]
	\item $V$ is diffeomorphic to $S^1\times [0,1]$ via a diffeomorphism, say $\phi$,
	\item $\phi^{-1}(S^1\times \{t\})$ is a connected component of some level set of $f$, $t\in [0,1]$,
	\item $V$ does not contain critical points of $f$.
\end{itemize}

Fix an $f$-adapted neighborhood $\nV_i$ of $C_i$, $i = 0,\ldots, n-1$, so that
\begin{itemize}[label=--\ ]
	\item $\nV_i \cap \nV_j = \varnothing$ for $i\not=j$;
	\item for each $i,j$ there exists $h\in\mathcal{S}'(f)$ such that $h(\nV_i)=\nV_j$.
\end{itemize}
In particular, the union
\[ \sV = \bigcup_{i = 0}^{n-1}\nV_i\]
is $\mathcal{S}'(f)$-invariant.

Notice that by definition the group $\Delta'(f,\sV)$ consists of diffeomorphisms of $T^2$ which are isotopic to $\mathrm{id}_{T^2}$, fixed on an $f$-adapted neighborhood $\sV$ of $\mathcal{C}$ and induce trivial homeomorphisms of $\Gamma_f$.
Let $j:\Delta'(f,\sV)\to \Delta'(f)$ be the natural inclusion.
It is known and is easy to see that the homomorphism
\[ j_0:\pi_0\Delta'(f,\sV)\to \pi_0\Delta(f)\] induced by $j$ is an epimorphism, see~\cite[Lemma 5.1]{Maksymenko:DefFuncI:2014}.
Let also $\nW_i$  be  an $f$-adapted neighborhood of $C_i$ satisfying $\nV_i \subset \mathrm{Int} \nW_i$
%\begin{align*}
%&    \nV_i \subset \mathrm{Int} \nW_i, &
%%&    \nW_i \subset \mathrm{Int} \nU_i
%\end{align*}
for $i = 0,1\ldots, n-1$.
Put $\sW = \bigcup_{i = 0}^{n-1} \nW_i,$
%\begin{align*}
%    \sW &= \bigcup_{i = 0}^{n-1} \nW_i, &
%    \sU &= \bigcup_{i = 0}^{n-1} \nU_i.
%\end{align*}
Then, \cite[Corollary 7.2]{Maksymenko:DefFuncI:2014}, the natural inclusion
\[ \Delta'(f,\sV)\hookrightarrow \Delta'(f,\sW) %\hookrightarrow \Delta'(f, \sU)
\]
is the homotopy equivalence and groups $\pi_0\Delta'(f)$ and $\pi_0\Delta'(f, \sV)$ are abelian groups.

% From now on we fix three families of nested $f$-adapted neighborhoods $\nV_i\subset \nW_i\subset \nU_i$ of $C_i$ for all $i = 0,\ldots, n-1$, and as above, we denote by $\sV$, $\sW$, and $\sU$ their unions.

%\warn{Recall that  the flow $\bF$ on $T^2$ is called {\it Hamiltonian-like flow} for the function $f:T^2\to \RRR$, if it satisfies the following two conditions:
%\begin{itemize}
%	\item the point $z\in T^2$ is fixed for $\bF$ iff $z$ is critical for $f$,
%	\item $f$ is constant along trajectories of $\bF$, i.e., $f(z) = f(\bF_t(z))$ for all $z\in M$ and $t\in\RRR$.
%\end{itemize}
%\bf Not correct definition of Hamiltonian like flow, we need special Hamiltonian like vector fields for which elements from $\mathcal{S}_{id}(f)$ can be represented as shifts along their flows
%}
Recall that a  vector field $F$ on a smooth oriented surface $M$ is called Hamiltonian-like for a Morse function $f$ if the following conditions hold:
\begin{itemize}
	\item singular points of $F$ correspond to critical points of $f$,
	\item $f$ is constant along $F$,
	\item Let $z$ be a critical point of $f$. Then there exists a local coordinate system $(x,y)$ such that $f(z) = 0$, $f(x,y) = \pm x^2 \pm y^2$ near $z$, and in these coordinates $F$ has the form
	$F(x,y) = -f'_y \frac{\partial }{\partial x} + f'_x \frac{\partial}{\partial x}$.
\end{itemize}
By \cite[Lemma 5.1]{Maksymenko:AGAG:2006} for every Morse function $f:M\to \RRR$ there exists a Hamiltonian-like vector field of $f$ if $M$ is smooth and orientable surface.

Fix a Hamiltonian-like vector field $F$ for the given  function Morse function $f$ on $T^2$, and let $\bF$ be the flow of $F$.
Then $\sW$ is $\bF$-invariant and consists of periodic orbits.
Therefore one can assume that periods of all trajectories of $\bF$ is equal to $1$ on $\sW$.

Let $\theta_i:T^2\to T^2$ be a slide along $C_i$ supported in $\nW_i\setminus \nV_i$, $i = 0,\ldots,n-1$, and
\[ \theta = \theta_0\circ \theta_1\circ \ldots\circ\theta_{n-1}.\]
Then, \cite[Lemma 5.2]{MaksymenkoFeshchenko:2015:HomPropCycleNonTri}, there exists a smooth function $\sigma:T^2\to \RRR$ such that
\begin{itemize}
	\item  $\sigma$ is constant along trajectories of $\bF$,
	\item $\sigma = 1$ on $\sV$, $\sigma = 0$ on $T^2\setminus \sW$, and
	\item $\theta = \bF_{\sigma}$,
\end{itemize}
Then for $k\in \ZZZ$ we have $\theta^k = \bF_{k\sigma}$.
From this definition it immediately follows that $\theta$ belongs to $\Delta'(f, \sV)$.
A free abelian subgroup of $\pi_0\Delta'(f,\sV)$ generated by $\theta$ will be denoted by $\langle\theta \rangle$.
The following theorem is our main result of this section.

\begin{theorem}\label{thm:H-kerj0}
	For $f \in \mathscr{F}_1$ the following statements hold true.
	\begin{enumerate}[leftmargin=*, label={\rm\arabic*)}]
		\item\label{enum:thm:H-kerj0:1}
		$\ker j_0 \cong \langle \theta\rangle\cong \mathbb{Z}$. In other words, each $h\in\Delta'(f,\sV)$ is isotopic to $\theta^{k(h)}$ for some $k(h)\in\ZZZ$ relatively $\sV$ whenever $[h]\in \ker j_0.$
		
		\item\label{enum:thm:H-kerj0:2}
		The following exact sequence  of abelian groups
		\begin{equation}\label{equ:thm:H-kerj0::exact_seq}
			\xymatrix{
				\langle\theta\rangle \ \ar@{^{(}->}[r] & \	\pi_0\Delta'(f,\sV)\ar[r]^-{j_0} \ & \ \pi_0\Delta'(f)
			}
		\end{equation}
		splits.
		In particular, we there is an isomorphism \[ \pi_0\Delta'(f,\sV)\cong \langle\theta \rangle\times \pi_0\Delta'(f).\]
	\end{enumerate}
\end{theorem}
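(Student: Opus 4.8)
The plan is to reduce the whole statement to the structure of the map $j_0 : \pi_0\Delta'(f,\sV) \to \pi_0\Delta'(f)$ and to exploit the explicit description of $\theta = \bF_{\sigma}$ as a flow element. I would organize the argument in two stages corresponding to the two assertions. For part~\ref{enum:thm:H-kerj0:1}, I must show that $\ker j_0$ is exactly the infinite cyclic group $\langle\theta\rangle$. One inclusion is essentially given: we already noted that $\theta \in \Delta'(f,\sV)$ and that $\theta = \bF_\sigma$ with $\sigma = 0$ on $T^2\setminus\sW$; since $\theta$ is supported away from $\sV$ and is a product of slides $\theta_i$ which become isotopic to $\id$ once we allow motion on $\sV$ (each slide can be ``undone'' by pushing across the adapted collar once the collar is no longer frozen), we get $j_0([\theta]) = 0$, hence $\langle\theta\rangle \subseteq \ker j_0$. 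The content is the reverse inclusion: given $[h]\in\ker j_0$, the diffeomorphism $h\in\Delta'(f,\sV)$ becomes isotopic to $\id_{T^2}$ after we forget the freezing on $\sV$, and I must recover from that isotopy a well-defined integer $k(h)$ with $[h]=[\theta^{k(h)}]$ in $\pi_0\Delta'(f,\sV)$.

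For this reverse inclusion I would argue as follows. Since $h\in\Delta'(f,\sV)$ acts trivially on $\Gamma_f$ and is isotopic to the identity through $\Delta'(f)$ (because $[h]\in\ker j_0$), it preserves every regular component of every level set, in particular every curve $C_i\in\cC$ and every cylinder $Q_i$. Working one cylinder at a time and using the flow $\bF$ to trivialize $\sW$ into a product $S^1\times[0,1]$ on which all trajectories have period $1$, the isotopy class of $h$ relative to $\sV$ is detected by a single ``winding'' or twisting integer recording how many times $h$ shears the collar $\nW_i\setminus\nV_i$ relative to the frozen boundary $\nV_i$. Because $\theta = \bF_\sigma$ with $\sigma=1$ on $\sV$, $\sigma=0$ outside $\sW$, exactly one unit of this shear is realized by $\theta$ itself; this is precisely the computation that Lemma~\ref{lm:func-cylinder}~\ref{enum:lm:func-cylinder:3} encodes, where the kernel of $\pi_0\mathcal{S}'(f,\partial Q)\to\pi_0\mathcal{S}'(f)$ is infinite cyclic generated by the relative Dehn twist. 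Matching the $f$-adapted collar picture on $T^2$ to the cylinder model of that lemma identifies $k(h)$ as the net twisting number, gives $h\simeq\theta^{k(h)}$ rel $\sV$, and shows $k(h)=0 \iff [h]=0$, so $\ker j_0 = \langle\theta\rangle\cong\ZZZ$.

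For part~\ref{enum:thm:H-kerj0:2}, it remains to produce a splitting of the short exact sequence \eqref{equ:thm:H-kerj0::exact_seq}. Since all groups involved are abelian (as recorded above, $\pi_0\Delta'(f)$ and $\pi_0\Delta'(f,\sV)$ are abelian), it suffices to exhibit a section $\pi_0\Delta'(f)\to\pi_0\Delta'(f,\sV)$ of $j_0$. The natural candidate is the map that takes a class $[g]\in\pi_0\Delta'(f)$, represents it by a diffeomorphism $g$ supported away from $\sV$ in the \emph{interior} of the cylinders (which is possible because the slides and twists generating $\Delta'(f)$ can be isotoped off the adapted collars $\sV$), and returns its class in $\pi_0\Delta'(f,\sV)$. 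Concretely, one can use the Hamiltonian-like flow $\bF$ and the cut-off function $\sigma$ to push any representative of $[g]$ so that it is the identity on $\sV$, yielding a lift that is visibly a homomorphism. The composition with $j_0$ is the identity by construction, so the sequence splits and $\pi_0\Delta'(f,\sV)\cong\langle\theta\rangle\times\pi_0\Delta'(f)$.

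I expect the main obstacle to be part~\ref{enum:thm:H-kerj0:1}: carefully setting up the twisting/winding integer $k(h)$ so that it is genuinely an isotopy invariant rel $\sV$ and is \emph{additive}, and then matching it against the single generator $\theta$. The key technical input is the cylinder computation of Lemma~\ref{lm:func-cylinder}~\ref{enum:lm:func-cylinder:3}; the delicate point is globalizing that local statement across all $n$ cylinders simultaneously while keeping the $f$-adapted collars $\sV$ frozen, and checking that no extra relations appear that would collapse $\langle\theta\rangle$ to a finite group.
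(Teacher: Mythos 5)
Your argument for part~\ref{enum:thm:H-kerj0:1} has a genuine gap at exactly the point you flag as ``delicate'' and then leave unresolved: the globalization across the $n$ cylinders. What your per-cylinder analysis could give (via Lemma~\ref{lm:func-cylinder}~\ref{enum:lm:func-cylinder:3}) is that each restriction $h|_{Q_i}$ of a kernel element is, relatively to $\sV\cap Q_i$, of the form $(\tau_0\circ\tau_1^{-1})^{b_i}$ for some integer $b_i$; but nothing in that cylinder lemma couples the integers $b_i$ of different cylinders, so by itself it leaves you with a potential kernel as large as $\ZZZ^n$, whereas the theorem asserts it is the diagonal copy of $\ZZZ$ generated by $\theta$ (note that $\theta$ restricts to each $Q_i$ as a single copy of $\tau_0\circ\tau_1^{-1}$, up to sign). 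So the real danger is not, as you write, that extra relations might collapse $\langle\theta\rangle$ to a finite group, but that $\ker j_0$ might come out too large. The coupling mechanism in the paper is the shift-function machinery, which you never invoke: since the identity component of $\Delta'(f)$ equals $\mathcal{S}_{\id}(f)$, every $h$ with $[h]\in\ker j_0$ is globally a shift $h=\bF_{\alpha}$ along the Hamiltonian flow for one smooth function $\alpha$ constant along trajectories (Lemmas 4.1 and 6.1 of the cited paper of Maksymenko). After cutting $\alpha$ off outside $\sW$, continuity of this single global $\alpha$ forces well-defined integer values $k_i(h)$ on the collars $\nV_i$ (each collar is shared by two adjacent cylinders), and the hypothesis $h\in\Diff_{\id}(T^2,\sV)$, built into the definition of $\Delta'(f,\sV)$, kills the smooth twisting number $k_{i+1}(h)-k_i(h)$ of each $h|_{Q_i}$, forcing all $k_i(h)$ to coincide; the linear homotopy $\bF_{(1-t)\alpha+tk(h)\sigma}$ then yields $h\simeq\theta^{k(h)}$ rel $\sV$. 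Without the representation $h=\bF_{\alpha}$ there is no well-defined ``shear integer'' attached to a general element of $\Delta'(f,\sV)$, and no mechanism tying the cylinders to one another.

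Part~\ref{enum:thm:H-kerj0:2} also has a gap. You claim that since the groups are abelian ``it suffices to exhibit a section,'' and your candidate section (represent a class of $\pi_0\Delta'(f)$ by a diffeomorphism pushed off $\sV$ and take its class rel $\sV$) is not shown to be well defined: two such representatives of the same class differ in $\pi_0\Delta'(f,\sV)$ precisely by an element of $\ker j_0=\langle\theta\rangle$, which is exactly the ambiguity a section must resolve, so ``visibly a homomorphism'' begs the question. Moreover, abelianness alone never yields splitting: the sequence $\ZZZ\stackrel{\times 2}{\longrightarrow}\ZZZ\longrightarrow\ZZZ_2$ is exact and abelian but does not split. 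The fact you need, and which the paper uses, is that the quotient $\pi_0\Delta'(f)$ is \emph{free} abelian (Maksymenko, 2006); a short exact sequence of abelian groups with free abelian quotient splits automatically, with no explicit section required.
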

\begin{proof}
	\ref{enum:thm:H-kerj0:1}
	Let $h \in \Delta'(f,\sV)$.
	We have to show that if $[h]\in \ker j_0$, then $h$ is isotopic to $\theta^{k(h)}$ relatively $\sV$ for some $k(h)\in \ZZZ$.
	
	Recall that the identity component of $\Delta'(f)$ is $\mathcal{S}_{\id}(f)$, see~\cite[Lemma 4.1]{Maksymenko:DefFuncI:2014}.
	Hence if $h\in\Delta'(f,\sV)$ is such that $[h]\in\ker j_0$, then $h\in\mathcal{S}_{\id}(f)$, and by~\cite[Lemma 6.1]{Maksymenko:DefFuncI:2014} there exists a unique smooth function $\alpha:T^2\to \RRR$ such that $h = \bF_{\alpha}$.
	
	We claim that such $h$ admits some ``simplification'' on $T^2\setminus \sW$, i.e., $h$ can be deformed relatively $\sV$ to a diffeomorphism $h'$ such that $[h']\in\ker(j_0)$ and $h'$ is also fixed on $T^2\setminus \sW$.
	Indeed, fix any smooth function $\delta:T^2\to \RRR$ satisfying
	\begin{itemize}[itemsep=1ex]
		\item $\delta|_{\sV} = 0$,
		\item $\delta|_{T^2\setminus\sW} = 1$,
		\item $\delta$ is constant along trajectories of $\bF$,
	\end{itemize}
	and define the following homotopy $H^t:T^2\to T^2$, $t\in [0,1]$, by the formula
	\[ H^t(h) = \bF^{-1}_{t\delta\alpha}\circ h.\]
	By \cite[Lemma 6.1]{Maksymenko:DefFuncI:2014} a family $\{H^t\}$ is in fact an isotopy between $H^0 = h$, $h' := H^1(h) = \bF^{-1}_{\delta\alpha}\circ h$ and each diffeomorphism $H^t(h)$ belongs to $\Delta'(f,\sV)$.
	In particular, the isotopy classes of $h$ and $h'$ in $\pi_0\Delta'(f,\sV)$ coincide, and $h'$ is fixed on $T^2\setminus\sW$.
	
	% so we can redenote $h'$ with $h$ and assume that $h$ is additionally fixed on $T^2\setminus\sW$.
	% Note that by definition $h'$ is fixed on $T^2\setminus\sW$.
	So we can redenote $h'$ with $h$ and additionally assume further that $h$ is fixed on $T^2\setminus\sW$.
	% will always assume that each $h\in\Delta'(f,\sV)$ such that $[h]\in\ker j_0$ is also fixed on $T^2\setminus \sW$.
	% Let $h\in \Delta'(f,\sV)$ be a diffeomorphism such that $[h]\in \ker j_0$ and $h$ is fixed on $T^2\setminus \sW$.
	The restriction of $\alpha$ and $h$ onto $\nV_i$ will be also denoted by $\alpha_i$ and $h_i$ respectively $i = 0,\ldots, n-1$.
	By assumptions $h$ is fixed on $\sV$, that is \[h_i(x) = \bF_{\alpha_i(x)}(x) = x,  \qquad x\in \nV_i.\]
	Since the period of trajectories of $\bF$ is equal to $1$ on $\sW$, it follows that $\alpha_i$ takes an integer value, say $k_i(h)\in\ZZZ$, depending on $h$ for each $x\in \nV_i$.
	
	We claim that in fact $\alpha_i$ takes the same value $k\in\ZZZ$ on each $\nV_i$, so all of those numbers $k_i(h)$ must coincide.
	Indeed, recall $h|_{Q_i}$ is isotopic relatively $\sV\cap Q_i$ to the Dehn twist $\tau^{a_i}$ supported on $\sV\cap Q_i$, where
	\[ a_i = \alpha(C_{i+1})-\alpha(C_i)= k_{i+1}(h)- k_i(h), \qquad i = 0,\ldots, n-1.\]
	However, as $h\in\Delta'(f,\sV)$, it follows that $h|_{Q_i}$ is isotopic relatively $\sV\cap Q_i$ to $\id_{Q_i}=\tau^{0}$ for each $i = 0,\ldots, n-1$, that is
	\[ k_{i+1}(h)- k_i(h) = a_i = 0, \]
	and so $k_{i+1}(h) = k_i(h)$ for each $i = 0,\ldots, n-1$.
	Thus we can denote the common values of all $k_i(h)$ simply by $k(h)$.
	
	Define now an isotopy $H^t(h):T^2\to T^2$ between $h = \bF_{\alpha}$ and $\theta^k = \bF_{k(h)\sigma}$ by the formula
	\[ H^t(h) = \bF_{(1-t)\alpha+tk(h)\sigma}, \qquad t\in [0,1].\]
	Then $H^t(h)$ is fixed on $\sV$ for all $t\in [0,1]$ and so each $h$ with $[h]\in\ker j_0$ is isotopic to $\theta^{k(h)}$ relatively to $\sV$.
	In other words, $\ker j_0 \subset \langle\theta\rangle$.
	
	The inverse inclusion is easy $\langle\theta\rangle\subset\ker j_0$, and so $\ker j_0=\langle\theta\rangle$ is an free abelian group generated by the element $\theta$.

	\ref{enum:thm:H-kerj0:2}
	Now it follows from~\ref{enum:thm:H-kerj0:1} and surjectivity of $j_0$ that we have a short exact sequence~\eqref{equ:thm:H-kerj0::exact_seq}:
	\[
	\xymatrix{
		\langle \theta\rangle \ \ar@{^{(}->}[r] & \ \pi_0\Delta'(f,\sV) \ \ar@{->>}[r]^{j_0}& \ \pi_0\Delta'(f).
	}
	\]
	Moreover, \cite{Maksymenko:AGAG:2006}, it consists of abelian groups and the group $\pi_0\Delta'(f)$ is a free abelian.
	Therefore the above sequence splits, i.e., there is an isomorphism $\pi_0\Delta'(f,\sV)\cong \pi_0\Delta'(f)\times \langle \theta\rangle$.
\end{proof}

\section{Orbits of Morse functions on 2-torus} \label{sec:Orbits}
The following theorem describes the fundamental groups of orbits of Morse functions on $2$-torus.
We showed that they can be computed using  zero homotopy groups  of stabilizers of restrictions of the given function onto subsurfaces of $T^2$ being $2$-disks and cylinders.
\begin{theorem}\label{thm-orbits-t2}
	{\rm\cite{MaksymenkoFeshchenko:2014:HomPropCycle, MaksymenkoFeshchenko:2014:HomPropTree, MaksymenkoFeshchenko:2015:HomPropCycleNonTri, Feshchenko:2014:HomPropTreeNonTri}.}
	Let $f$ be a Morse function on $T^2$, and $\Gamma_f$ be its graph.
	\begin{enumerate}[wide, label={\rm(\arabic*)}, itemsep=1ex]
		\item\label{enum:thm-orbits-t2:1}
		Assume that $f$ belongs to $\mathscr{F}_0$ and $G_v^{loc}\cong \ZZZ_n\times\ZZZ_{\xxnm}$ for some $n,m\in \NNN$.
		Then there exists a set of mutually disjoint $2$-disks $\mathbb{D} = \{D_i\}_{i = 1}^r\subset T^2$ for some $r\in \NNN$ which is a fundamental set of the free $G_v^{loc}$-action on $T^2$ such that the restriction $f|_{D_i}$, $i=1,\ldots,r$, is a Morse function, and there is an isomorphism
		\begin{equation}\label{equ:thm-orbits-t2:xi1}
			\xi_1: \mathcal{S}_{\mathbb{D}}\wr_{n,{\xxnm}}\ZZZ^2\to \pi_1\mathcal{O}_f(f),
			%\pi_1\mathcal{O}_f(f) \ \cong \ \mathcal{S}_{\mathbb{D}}\wr_{n,{\xxnm}}\ZZZ^2,
		\end{equation}
		where $\mathcal{S}_{\mathbb{D}} := \prod\limits_{i = 1}^r \pi_0\mathcal{S}'(f|_{D_i}, \partial D_i)$. Moreover $r$ is the number of orbits of free $G_v^{loc}$-action on $T^2$.

		\item\label{enum:thm-orbits-t2:2}
		Assume that $f$ belongs to $\mathscr{F}_1$ and has a cyclic index $n\in\NNN$.
		Then there exists a cylinder $Q\subset T^2$ such that $f|_{Q}$ is also a Morse function and we have an isomorphism
		\begin{equation}\label{equ:thm-orbits-t2:xi2}
			\xi_2: \mathcal{S}_Q\wr_{n}\ZZZ\to \pi_1\mathcal{O}_f(f),
			%\pi_1\mathcal{O}_f(f)\cong \mathcal{S}_Q\wr_{n}\ZZZ,
		\end{equation}
		where $\mathcal{S}_Q:= \pi_0\mathcal{S}'(f|_{Q}, \partial Q)$.
	\end{enumerate}
\end{theorem}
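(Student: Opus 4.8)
The plan is to construct the isomorphisms $\xi_1$ and $\xi_2$ directly from geometric deformations and then to verify bijectivity through the fibration exact sequence~\eqref{eq:pi1-main}. Both cases follow one template: one exhibits an invariant family of regular level curves cutting $T^2$ into a fundamental piece --- a union of disks in case~\ref{enum:thm-orbits-t2:1}, a cylinder in case~\ref{enum:thm-orbits-t2:2} --- together with a finite abelian ``combinatorial symmetry'' that acts freely on $T^2$ by $f$-preserving diffeomorphisms and permutes the pieces. Each factor of the wreath product then acquires a geometric meaning: the base is the product of the local groups $\pi_0\mathcal{S}'$ over one symmetry-orbit of pieces, and the acting group is the lift of the symmetry to a non-effective $\ZZZ^2$- (resp.\ $\ZZZ$-) action whose generators are realized by ``rotation'' loops in $\mathcal{D}_{\id}(T^2)$.

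Consider case~\ref{enum:thm-orbits-t2:1}, $f\in\mathscr{F}_0$. By Lemma~\ref{lm:spec-vertex} there is a unique special vertex $v$, and by Lemma~\ref{lm:local-stab-acts} the local stabilizer $G_v^{loc}\cong\ZZZ_n\times\ZZZ_m$ acts freely on $T^2$ through a homomorphic section $s$ into $\mathcal{S}'(f)$, with $T^2/G_v^{loc}\cong T^2$. By Remark~\ref{rm:G-act} the components of $T^2\setminus p_f^{-1}(v)$ are disks $D_{ijk}$, $i=1,\dots,r$, $j\in\ZZZ_n$, $k\in\ZZZ_m$, on which the symmetry acts by shifting $(j,k)$, with fundamental set $\mathbb{D}=\{D_{i00}\}$. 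To build $\xi_1$ I would send a coordinate $g\in\pi_0\mathcal{S}'(f|_{D_i},\partial D_i)$ to the loop in $\mathcal{O}_f(f)$ obtained by lifting, through the fibration of Theorem~\ref{thm:homotopy-orbits}\,(1) for $f|_{D_i}$, a deformation representing $g$ and extending it by the identity outside $D_i$; and I would send the two generators of the external $\ZZZ^2$ to the loops $\{f\circ R^{(1)}_t\}$ and $\{f\circ R^{(2)}_t\}$, where $R^{(\ell)}_t$ is an isotopy from $\id_{T^2}$ to the rotation $R^{(1)}=s(1,0)$, $R^{(2)}=s(0,1)$. Compatibility with the product structure of $\mathcal{S}_{\mathbb{D}}^{nm}\rtimes_\gamma\ZZZ^2$ reduces to the observation that conjugating a disk-supported loop by a rotation carries its support to the next disk, which is exactly the shift $\gamma$. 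Since $s$ is a homomorphism, $(R^{(1)})^{n}=(R^{(2)})^{m}=\id_{T^2}$, so the $n$- and $m$-fold concatenations of these loops close up inside $\mathcal{D}_{\id}(T^2)$ at $\id_{T^2}$; it is precisely the fact that they represent the two standard generators of $\pi_1\mathcal{D}_{\id}(T^2)\cong\pi_1 T^2$ (Remark~\ref{rem:general}) that forces the external group to be $\ZZZ^2$ rather than the finite $G_v^{loc}$.

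Case~\ref{enum:thm-orbits-t2:2}, $f\in\mathscr{F}_1$, runs along the same lines with the parallel family $\cC=\{C_0,\dots,C_{n-1}\}$ in place of the disks. Cutting along $\cC$ yields the $n$ cylinders $Q_0,\dots,Q_{n-1}$, any two of which are interchanged by elements of $\mathcal{S}'(f)$, and the cyclic shift $C_i\mapsto C_{i+1}$ is realized by a rotation $R\in\mathcal{S}'(f)$. The base $\mathcal{S}_Q^n$ is assembled from deformations supported in the individual cylinders, each governed by the structure of $\pi_0\mathcal{S}'(f|_Q,\partial Q)$ recorded in Lemma~\ref{lm:func-cylinder}, while the external $\ZZZ$ is generated by $\{f\circ R_t\}$. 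Here the closing-up phenomenon is sharper: the diffeomorphism $R^n$ fixes every $C_i$ and is isotopic to $\id_{T^2}$, yet the corresponding loop is \emph{not} null-homotopic in $\pi_1\mathcal{O}_f(f)$; it is the central element $(e,n)$ of $\mathcal{S}_Q\wr_n\ZZZ$ (trivial base component $e$, shift by $n$), central precisely because the shift $\alpha$ acts trivially on $n\ZZZ$. This is the ``Garside''-type element of Remark~\ref{rem:general}, and it is why the external factor is $\ZZZ$ rather than $\ZZZ_n$.

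To prove bijectivity of $\xi_1$ and $\xi_2$ I would compare them with~\eqref{eq:pi1-main} by a short five-lemma argument. Under $\xi$ the residual quotient $\ZZZ_n\times\ZZZ_m$ (resp.\ $\ZZZ_n$) of the external group maps isomorphically onto the combinatorial shift group $G_v^{loc}$ (resp.\ the cyclic-index action), the remaining central generators span $p_1(\pi_1\mathcal{D}_{\id}(T^2))\cong\ZZZ^2$, and the rest of the base accounts for the curve-/disk-fixing part of $\pi_0\mathcal{S}'(f)$. Surjectivity follows because any $h\in\mathcal{S}'(f)$ factors as a rotation followed by a diffeomorphism fixing every curve of the invariant family, the latter decomposing piece-by-piece; injectivity follows because deformations supported in disjoint pieces commute in $\pi_1\mathcal{O}_f(f)$ and are separated by the local fibrations of Theorem~\ref{thm:homotopy-orbits}\,(1). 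I expect the main obstacle to be exactly the identification highlighted above: proving that the iterated rotation loops --- and, in case~\ref{enum:thm-orbits-t2:2}, the element $\theta$ of Theorem~\ref{thm:H-kerj0} --- have infinite order and precisely fill out the central $\ZZZ^2=p_1(\pi_1\mathcal{D}_{\id}(T^2))$ rather than collapsing to a finite group. This is the one place where the global topology of $T^2$, and not merely the combinatorics of $\Gamma_f$, enters; it rests on the rigidity of the Hamiltonian-like flow exploited in Theorem~\ref{thm:H-kerj0} together with the non-triviality of the Dehn twists along the cutting curves.
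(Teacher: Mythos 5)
Your construction is the same one the paper uses: disk/cylinder-supported isotopies conjugated by rotation flows, plus rotation loops for the external factors, are exactly formulas \eqref{eq:h(x)} and \eqref{eq:hh(x)} defining $\xi_1$ and $\xi_2$ in Subsections~\ref{subsec:xi_1} and~\ref{subsec:xi_2}, and your key identifications --- the $n$-fold concatenation of a rotation loop giving $p_1(\mathsf{L})$, the central element $(e,\ldots,e,n)$, and the second generator in case (2) being carried by the slide element $\theta$ rather than by the external $\ZZZ$ --- are precisely Lemma~\ref{lm:ZZ} and the lemmas of Subsection~\ref{subsec:images2}. Keep in mind that the paper itself only imports Theorem~\ref{thm-orbits-t2} from the cited references (Sections~\ref{sec:proof-1} and~\ref{sec:proof-2} recall the maps $\xi_1,\xi_2$ rather than reprove bijectivity), so, apart from your harmless relabeling of the paper's $\ZZZ_{nm}$ as $\ZZZ_m$, your outline reconstructs essentially that same route.
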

\subsection{Strategy of the proof of Theorem \ref{thm:main}}\label{subsec:Strat}
First we recall explicit definitions of isomorphisms
\begin{align*}
	&\xi_1:\mathcal{S}_{\mathbb{D}}\wr_{n,\xxnm}\ZZZ^2\to \pi_1\mathcal{O}_f(f), &
	&\xi_2:\mathcal{S}_Q\wr_n\ZZZ \to \pi_1\mathcal{O}_f(f)
\end{align*}
from Theorem~\ref{thm-orbits-t2} in Subsections~\ref{subsec:xi_1} and~\ref{subsec:xi_2} respectively.
Since groups $\pi_0\mathcal{S}'(f)$ and $G(f)$ are quotient-groups
\begin{gather*}
	\pi_0\mathcal{S}'(f)\cong \pi_1\mathcal{O}_f(f)/\pi_1\mathcal{D}_{\id}(T^2), \\
	G(f)\cong \pi_1\mathcal{O}_f(f)/(\pi_1\mathcal{D}_{\id}(T^2)\times \pi_0\Delta'(f)),
\end{gather*}
see diagram \eqref{diag:main-diag}, it follows that in order to describe them and the diagram \eqref{diag:main-diag} we need to characterize images of $\pi_1\mathcal{D}_{\id}(T^2)$ and $\pi_0\Delta'(f)$ with respect to the maps $\xi_1^{-1}$ and $\xi_2^{-1}$ and take the corresponding quotient-groups.
This will be done in Subsection \ref{subsec:images} and Subsection \ref{subsec:images2}.

\section{Proof of~\ref{enum:thm:main:1} of Theorem~\ref{thm:main}}
\label{sec:proof-1}

Due to our strategy, see Subsection~\ref{subsec:Strat}, first we give the explicit description of the isomorphism $\xi_1:\mathcal{S}_{\mathbb{D}}\wr_{n,\xxnm}\ZZZ^2\to \pi_1\mathcal{O}_f(f)$.
\subsection{Isomorphism from~\ref{enum:thm-orbits-t2:1} of Theorem \ref{thm-orbits-t2}}
\label{subsec:xi_1}
%We will now recall the explicit formula for the isomorphism $\xi_1$.
%\warn{We will now recall explicit formulas for the isomorphism~\eqref{equ:thm-orbits-t2:xi1}
%\[ \xi_1:\mathcal{S}_{\mathbb{D}}\wr_{n,{\xxnm}}\ZZZ^2\to \pi_1\mathcal{O}_f(f).\]%
%\bf the last formula repeats 4 times 
%}
%  we need to choose special generators of the group $\pi_1 T^2$ agreeing with a free $G_v^{loc}$-action on $T^2$.

Let $f$ be a Morse function on $T^2=\RRR^2/\ZZZ^2$ such that its graph $\Gamma_f$ is a tree, $v$ be the special vertex of $\Gamma_f$, and $G_v^{loc} = \ZZZ_n\times \ZZZ_{\xxnm}$ be the local stabilizer of $v$.
Then the free action of $\ZZZ_n\times \ZZZ_{\xxnm}$ on $T^2$ can be given by:
\[
\kappa_{(a,b)}(x,y) = \Bigl(x + \frac{a}{n} \ \mod\ 1, \ y + \frac{b}{\xxnm} \ \mod\ 1 \Bigr),
\]
for $(a,b)\in\ZZZ_n\times \ZZZ_{nm}$ and $(x,y)\in T^2$, and the quotient space $T^2/G_v^{loc}$ is diffeomorphic to $T^2 = \RRR^2/\ZZZ^2$, so that the quotient map $q:T^2\to T^2/G_v^{loc}$ is given by the formula
\[
q(x,y) = (nx \ \mod\ 1, \ \xxnm y \ \mod\ 1).
\]

Let $y$ be a point in $T^2$, and $z = q(y)\in T^2/G_v^{loc}$.
Then we obtain the following commutative diagram with exact rows
\begin{equation}\label{eq:cover}
	\begin{aligned}
		\xymatrix{
			\pi_1 (T^2,y) \ar@{^{(}->}[rr]^-{q_1} \ar@{=}[d] && \pi_1 (T^2/G_v^{loc}, z) \ar@{->>}[rr]^-{\partial_1} \ar@{=}[d]&& G_v^{loc} \ar@{=}[d]\\
			\ZZZ^2 \ar@{^{(}->}[rr]^-{q_1}                   && \ZZZ^2\ar@{->>}[rr]^-{\partial}                                && \ZZZ_n\times \ZZZ_{\xxnm},
		}
	\end{aligned}
\end{equation}
where maps $q_1$ and $\partial$ are given as follows
\begin{align*}
	q_1(\lambda, \mu) &= (n\lambda, \ {\xxnm}\mu), &
	\partial(x,y) &= (x\ \mod\ n, \ y\ \mod\ {\xxnm}).
\end{align*}

Let $\mathsf{L},\mathsf{M}:T^2\times\RRR\to T^2$ be smooth flows on $T^2$ defined by
\begin{align*}
	\mathsf{L}(x,y,t) &=\Bigl( x+\frac{t}{n} \ \mod \ 1, \ y, \ t\Bigr), &
	\mathsf{M}(x,y,t) &=\Bigl( x, \ y + \frac{t}{\xxnm} \ \mod \ 1, \ t\Bigr).
\end{align*}
Then they commute each with other, and
\begin{align*}
	\mathsf{L}_a(x,y) &= \kappa_{(a,0)} (x,y), &
	\mathsf{M}_b(x,y) &= \kappa_{(0,b)} (x,y),
\end{align*}
for $a,b\in \RRR$.
Moreover, $\mathsf{L}_n=\mathsf{M}_{\xxnm} = \id_{T^2}$, and the restrictions
\begin{align*}
	&\mathsf{L}: T^2\times[0,n] \to T^2, &
	&\mathsf{M}: T^2\times[0,\xxnm] \to T^2
\end{align*}
can be regarded as loops in $\mathcal{D}_{\id}(T^2)$ constituting also a basis $(1,0)$, $(0,1)$ of  $\pi_1\mathcal{D}_{\id}(T^2) \cong \ZZZ^2$.

Then, \cite{Feshchenko:2014:HomPropTreeNonTri}, the isomorphism $\xi_1: \mathcal{S}_{\mathbb{D}}\wr_{n,{\xxnm}}\ZZZ^2\to \pi_1\mathcal{O}_f(f)$, see~\eqref{equ:thm-orbits-t2:xi1}, can be defined as follows.
Let
\[ \bigl( \{ h_{ijk}\}, (a,b) \bigr) \in \mathcal{S}_{\mathbb{D}}\wr_{n,{\xxnm}}\ZZZ^2,\]
where $h_{ijk}\in \mathcal{S}'(f|_{D_{i00}}, \partial D_{i00})$, $(a,b)\in \ZZZ^2$, $i = 1,\ldots, r$, $j = 0,\ldots, n-1$, $k = 0,\ldots,\xxnm-1$.
For each triple $(i,j,k)$ fix any isotopy $h^t_{ijk}:D_{i00}\to D_{i00}$, $t\in [0,1]$ between $h^0_{ijk} = \id_{D_{i00}}$ and $h^1_{ijk} = h_{ijk}$ relatively some neighborhood of $\partial D_{i00}$.
Then
\begin{equation}
	\label{eq:xi_1}
	\xi_1\bigl( \{ h_{ijk}\}, (a,b) \bigr) = [\{ f\circ h^t \}],
\end{equation}
where $h^t:T^2\to T^2$, $t\in[0,1]$, is given by the formula
\begin{equation}
	\label{eq:h(x)}
	h^t(x) = \begin{cases}
		\{\mathsf{M}_{k+\frac{bt}{nm}}\circ \mathsf{L}_{j+ \frac{at}{n}}\circ h^t_{ijk}\circ \mathsf{L}_j^{-1}\circ \mathsf{M}_k^{-1}(x)\}_{ijk},& x\in D_{ijk}, \\[2ex]
		\mathsf{M}_{ \frac{bt}{nm}}\circ \mathsf{L}_{\frac{at}{n}}(x),& x\in N,
	\end{cases}
\end{equation}
$D_{ijk}$ is a $2$-disk defined in~\ref{rm:G-act:ii} of Remark~\ref{rm:G-act}, and $N$ is a regular neighborhood of the critical level-set $V=p_f^{-1}(v)$ containing no other critical points.

\subsection{Images of $\pi_1\mathcal{D}_{\id}(T^2)$ and $\pi_0\Delta'(f)$ in $\mathcal{S}_{\mathbb{D}}\wr_{n,\xxnm}\ZZZ^2$}\label{subsec:images}
The following lemma describes the images of the groups $\pi_1\mathcal{D}_{\id}(T^2)$ and $\pi_0\Delta'(f)$ with respect to the map $\xi_1^{-1}$.
\begin{lemma}\label{lm:ZZ}
	\begin{enumerate}[leftmargin=*, label={\rm(\arabic*)}]
		\item\label{enum:lm:ZZ:1}
		Let $Z_{n,0} = (\underbrace{e,\ldots, e}_{{\xxnm}n}, n,0)$ and $Z_{0,nm} = (\underbrace{e,\ldots, e}_{{\xxnm}n}, 0,nm)$ be elements from $\mathcal{S}_{\mathbb{D}}\wr_{n,\xxnm}\ZZZ^2$, where $e$ is the unit of $\mathcal{S}_{\mathbb{D}}$.
		Then
		\[
		p_1 (\mathsf{L}) = \xi_1(Z_{n,0}), \qquad  p_1 (\mathsf{M}) = \xi_1(Z_{0,nm}).
		\]
		
		\item\label{enum:lm:ZZ:2}
		The isomorphism $\xi_1$ induces an isomorphism 
		\[\xi_1|_{\Delta^{\xxnmn}_{\mathbb{D}}}:(\Delta^{\xxnmn}_{\mathbb{D}}, 0,0)\to \iota_1(\pi_0\Delta'(f)).\]
	\end{enumerate}
\end{lemma}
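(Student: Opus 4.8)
The plan is to verify both statements directly from the explicit formula~\eqref{eq:h(x)} for the isotopy $h^t$ defining $\xi_1$, exploiting the fact that the flows $\mathsf{L}$ and $\mathsf{M}$ realize precisely the generators of $\pi_1\mathcal{D}_{\id}(T^2)$.

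For part~\ref{enum:lm:ZZ:1}, I would unwind $\xi_1(Z_{n,0})$ using~\eqref{eq:h(x)}. Since in $Z_{n,0}$ every disk-component $h_{ijk}$ equals the unit $e$, we may take each local isotopy $h^t_{ijk} = \id_{D_{i00}}$, so the conjugating factors $\mathsf{L}_j, \mathsf{M}_k$ cancel and the formula collapses to $h^t(x) = \mathsf{L}_{t}(x)$ on each disk (using $a = n$, $b = 0$, so that $\mathsf{L}_{j + at/n}\circ\mathsf{L}_j^{-1} = \mathsf{L}_t$), and $h^t(x) = \mathsf{L}_{t}(x)$ on $N$ as well. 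Thus $h^t$ is globally just the flow $\mathsf{L}$ restricted to $t\in[0,1]$ reparametrized to traverse $\mathsf{L}$ on $[0,n]$ — that is, $\xi_1(Z_{n,0}) = [\{f\circ \mathsf{L}_t\}]$, which is by definition $p_1(\mathsf{L})$ since $\mathsf{L}$ is the loop in $\mathcal{D}_{\id}(T^2)$ representing the basis element $(1,0)$ of $\pi_1\mathcal{D}_{\id}(T^2)\cong\ZZZ^2$ and $p_1$ is induced by $h\mapsto f\circ h$. The computation for $\xi_1(Z_{0,nm}) = p_1(\mathsf{M})$ is symmetric, with $a = 0$, $b = nm$, so that the $\mathsf{M}$-flow traverses its full period. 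The only care needed is matching reparametrizations, since $\mathsf{L}$ runs over $[0,n]$ and $\mathsf{M}$ over $[0,nm]$, but reparametrization does not change the homotopy class of a loop.

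For part~\ref{enum:lm:ZZ:2}, I would first identify $\iota_1(\pi_0\Delta'(f))$ concretely. Recall from~\eqref{eq:G-sec} that $\iota_1$ sends $(\alpha,\phi)$ to $[f\circ h^t\circ\phi^t]$; taking $\alpha$ trivial, $\iota_1(\pi_0\Delta'(f))$ consists of classes $[f\circ\phi^t]$ where $\phi\in\Delta'(f)$ leaves each level-set component invariant. Since elements of $\Delta'(f)$ act trivially on $\Gamma_f$ and hence fix the special vertex and permute no disks, such $\phi$ restrict to $\Delta'$-diffeomorphisms on each disk $D_{ijk}$ independently, with no shift in the $\ZZZ^2$-factor. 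Hence under $\xi_1^{-1}$ they land in the subgroup $(\Delta_{\mathbb{D}}^{\xxnmn}, 0, 0)$ — the factor $G^{nm} = G^{\xxnmn}$ ranging over all $nm\cdot n$ disk-indices $(i,j,k)$ with trivial $\ZZZ^2$-part. The key step is to show $\xi_1$ restricts to a \emph{bijection} between $(\Delta_{\mathbb{D}}^{\xxnmn},0,0)$ and $\iota_1(\pi_0\Delta'(f))$: injectivity is immediate since $\xi_1$ is an isomorphism, while surjectivity requires that every $\Delta'(f)$-class is captured, which follows because a diffeomorphism fixing every level-component and isotopic to the identity is, up to the disk-decomposition, assembled from its restrictions to the disks $D_{ijk}$ together with twists that the $\Delta_{Y}$-factors already encode.

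The main obstacle I anticipate is part~\ref{enum:lm:ZZ:2}, specifically verifying that $\iota_1(\pi_0\Delta'(f))$ is \emph{exactly} $\xi_1(\Delta_{\mathbb{D}}^{\xxnmn},0,0)$ and not merely contained in it. The containment ``$\subseteq$'' is the routine direction sketched above; the reverse inclusion demands that every collection $\{h_{ijk}\}\in\Delta_{\mathbb{D}}^{\xxnmn}$ with trivial shift assembles — via~\eqref{eq:h(x)} with $a = b = 0$ so that $h^t$ is supported in the disks and fixed on $N$ — into an element of $\Delta'(f)$, i.e.\ one acting trivially on $\Gamma_f$. This is where the hypothesis $f\in\mathscr{F}_0$ and the structure of $\Delta_{\mathbb{D}} = \prod_i\pi_0\Delta'(f|_{D_i},\partial D_i)$ must be used: each factor already records precisely the $\Gamma_f$-trivial deformations on a disk relative to its boundary, so glued together they give a diffeomorphism fixed near $V = p_f^{-1}(v)$ and inducing the identity on $\Gamma_f$. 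I would formalize this by noting that $h^t$ in~\eqref{eq:h(x)} with $a = b = 0$ is identity on $N$ and, on each $D_{ijk}$, is the local $\Delta'$-isotopy conjugated by the covering translations $\mathsf{L}_j\mathsf{M}_k$, hence preserves the partition $\Xi$ into level-components and defines a loop whose $\partial_1$-image is trivial precisely when the disk-data are $\Delta'$-classes.
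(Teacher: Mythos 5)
Your proposal is correct in substance, and for part~\ref{enum:lm:ZZ:1} it is essentially the paper's own argument: plug $Z_{n,0}$ (resp.\ $Z_{0,nm}$) into~\eqref{eq:h(x)}, note that the trivial disk data make the conjugating factors cancel, and observe that the resulting isotopy is the flow $\mathsf{L}$ (resp.\ $\mathsf{M}$) traversing its full period, so $\xi_1(Z_{n,0})=p_1(\mathsf{L})$; you even use the correct value $a=n$, whereas the paper's text reads ``$a=1$'', an apparent misprint. For part~\ref{enum:lm:ZZ:2} your route genuinely differs. The paper declares the containment $\xi_1(\Delta^{\xxnmn}_{\mathbb{D}},0,0)\subseteq\iota_1(\pi_0\Delta'(f))$ to be obvious and spends its effort on surjectivity, which it gets by invoking the structural fact that $\pi_0\Delta'(f)$ is generated by Dehn twists supported in single disks $D_{ijk}$: for such a generator $[\tau]$, formula~\eqref{eq:h(x)} exhibits a preimage $(\{\tilde{h}\},0,0)$ with $\tilde{h}$ conjugate to $\tau|_{D}$, hence a Dehn twist, hence lying in the $\Delta_{\mathbb{D}}$-factors. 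You invert the emphasis: you prove surjectivity by restricting an \emph{arbitrary} $\phi\in\Delta'(f)$ to the disks, and you treat the assembly direction (disk data with $a=b=0$ glue to an element of $\Delta'(f)$) as the main obstacle --- this assembly argument is sound and in fact supplies exactly the step the paper waves off as obvious. What the paper's generator argument buys is that a Dehn twist supported in a disk is already fixed near $\partial D_{ijk}$, so no deformation is needed; what your element-wise argument buys is independence from the generator structure of $\pi_0\Delta'(f)$, but it then requires one standard ingredient you leave implicit: a general $\phi\in\Delta'(f)$ preserves each $D_{ijk}$ yet is \emph{not} fixed near $V=p_f^{-1}(v)$, so its restrictions are not literally elements of $\pi_0\Delta'(f|_{D_i},\partial D_i)$. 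You must first deform $\phi$ within its isotopy class to a representative fixed on a neighborhood of $V$ --- the tree-case analogue of the epimorphism $j_0:\pi_0\Delta'(f,\sV)\to\pi_0\Delta'(f)$ from \cite[Lemma 5.1]{Maksymenko:DefFuncI:2014} used in Section~\ref{sec:Delta'(f)} --- and then use contractibility of $\mathcal{D}(D^2,\partial D^2)$ to see that the restrictions are isotopic to the identity relative to the boundary; with that lemma made explicit, your plan closes completely.
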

\begin{proof}
	\ref{enum:lm:ZZ:1}
	Indeed, $\xi_1(Z_{n,0})$ is a loop $f\circ h^t,$ where $h^t$ is given by~\eqref{eq:h(x)} with $a = 1$ and $b = 0$.
	Then by~\eqref{eq:h(x)} the isotopy $h$ has the form $h^t = \mathsf{L}_{t}$.
	
	The case of $p_1(\mathsf{M})$ can be checked similarly and  we leave this verification to the reader.
	So we have $p_1 (\langle\mathsf{L}\rangle) = \xi_1(\langle Z_{n,0}\rangle)$ and $p_1 (\langle\mathsf{M}\rangle) = \xi_1(\langle Z_{0,nm}\rangle)$.

	\ref{enum:lm:ZZ:2}
	Let $\iota: \pi_0\Delta'(f)\to \pi_1\mathcal{O}_f(f)$ be a restriction of $\iota_1$ onto ${\pi_0\Delta'(f)}$.
	We need to show that the subgroup $(\Delta^{\xxnmn}_{\mathbb{D}}, 0,0)\subset \mathcal{S}_{\mathbb{D}}\wr_{n,\xxnm}\ZZZ^2$ is mapped via $\xi_1$ isomorphically to $\iota(\pi_0\Delta'(f))$.
	Obviously, an isomorphism $\xi_1$ induces a monomorphism $\xi_1|_{\Delta^{\xxnmn}_{\mathbb{D}}}: \Delta^{\xxnmn}_{\mathbb{D}} \to \iota_1(\pi_0\Delta'(f))$.
	It remains to show that $\xi_1|_{\Delta^{\xxnmn}_{\mathbb{D}}}$ is an epimorphism.
	
	Let $[\tau]$ be a generator of $\pi_0\Delta'(f)$.
	Then $\tau$ be a Dehn twist supported on some $2$-disk $D\in \{D_{i,j,k}\}_{i = 1,\ldots, r,\,j = 0,\ldots,n-1,\, k = 0,\ldots, \xxnm-1}$.
	Fix an isotopy $\tau^t:T^2\to T^2$ between $\tau^1 = \tau$ and $\tau^0 = \id_{T^2}$.
	The image $\iota([\tau])$ is given by the class of the loop $[f\circ \tau^t]$.
	It follows from~\eqref{eq:h(x)} that there exist a diffeomorphism $\tilde{h}$ of $D$ and an isotopy $\tilde{h}^{t}:D\to D$ between $\tilde{h}^1 = \tilde{h}$ and $\tilde{h}^0 = \id_D$ such that $\xi_1(\{[\tilde{h}]\}, 0,0) = [f\circ \tau^t]$.
	Also we get from~\eqref{eq:h(x)} that $\tilde{h}$ is conjugate to $\tau|_D$, and  so $\tilde{h}$ is a Dehn twist.
	Hence $\tilde{h}$ belongs to $\Delta_{\mathbb{D}}^{\xxnmn}$.
	Thus $\xi_1|_{\Delta^{\xxnmn}_{\mathbb{D}}}$ is an isomorphism.
\end{proof}

\subsection{Final remarks for the proof of~\ref{enum:thm:main:1} of Theorem~\ref{thm:main}}
Recall that each third term of every short exact sequence of groups is uniquely defined by other two.
So groups $\pi_0\mathcal{S}'(f)$ and $G(f)$ are the corresponding quotient-groups $\pi_1\mathcal{O}_f(f)/\pi_1\mathcal{D}_{\id}(T^2)$ and $G(f) = \pi_0\mathcal{S}'(f)/\pi_0\Delta'(f)$, see diagram~\eqref{diag:main-diag}.
The images of $\pi_1\mathcal{D}_{\id}(T^2)$ and $\pi_0\Delta'(f)$ in $\pi_1\mathcal{O}_f(f)$ are known, see Subsection~\ref{subsec:images}.
So it is easy to prove that an isomorphism $\xi_1$ induces the following isomorphisms
\begin{align*}
	&\pi_0\mathcal{S}'(f)\cong \mathcal{S}_{\mathbb{D}}\wr (\ZZZ_n\times \ZZZ_{nm}), &
	&G(f)\cong G_{\mathbb{D}}\wr(\ZZZ_n\times\ZZZ_{\xxnm}),
\end{align*}
as well as isomorphism of diagrams from~\ref{enum:thm:main:1} of Theorem~\ref{thm:main}.

\section{Proof of~\ref{enum:thm:main:2} of Theorem~\ref{thm:main}}
\label{sec:proof-2}
First we give the explicit description of an isomorphism  $\xi_2:\mathcal{S}_Q\wr_n\ZZZ \to \pi_1\mathcal{O}_f(f)$.
\subsection{Isomorphism from~\ref{enum:thm-orbits-t2:2} of Theorem~\ref{thm-orbits-t2}}
\label{subsec:xi_2}
Let $f$ be a Morse function from $\mathscr{F}_1$ with a cyclic index $n$ and circuit $\Theta$ in the graph $\Gamma_f$.
As curves from $\mathcal{C}$ are ``parallel'', one can assume that the following conditions hold:

(a) $C_i = \frac{i}{n}\times S^1\subset \RRR^2/\ZZZ^2 = T^2$;

(b) there exists $\varepsilon\geq 0$ such that for all $t\in (\frac{i}{n}-\varepsilon, \frac{i}{n}+\varepsilon)$ the curve $t\times S^1$ is a regular connected component of some level set of $f$.

Let $\mathbb{L}, \mathbb{M}: T^2\times \RRR\to T^2$ be two flows defined by formulas:
\begin{align*}
	\mathbb{L}_t(x,y) &= (x+t\ \mod\  1,\ y), &
	\mathbb{M}_t(x,y) &= (x,\ y + t\ \mod\ 1),
\end{align*}
$x\in C'$, $y\in C_0$, and $t\in \RRR$.

Denote by $Q$  the cylinder bounded by $C_0$ and $C_1$.
Then the isomorphism $\xi_2:\mathcal{S}_{Q}\wr_n\ZZZ\to \pi_1\mathcal{O}_f(f)$ can be defined as follows, \cite[Section 8]{MaksymenkoFeshchenko:2015:HomPropCycleNonTri}.
Let
\[
(h_1,\ldots,h_n; a) \in \mathcal{S}_{Q}\wr_n\ZZZ,
\]
where $h_i\in \mathcal{S}_Q$, $i = 0,\ldots, n$, and $a\in\ZZZ$.
Fix any isotopy $h_i^t$ between $h^0_i = \id_{Q}$ and $h^1_i = h_i$.
Then
\begin{equation}
	\label{eq:xi_2}
	\xi_2(h_1^t,\ldots,h_n^t; a)  = [f\circ h^t],
\end{equation}
where $h^t$ is defined by the formula
\begin{equation}
	\label{eq:hh(x)}
	h^t(x) = \{\mathbb{L}_{i + \frac{at}{n}}\circ h^t_i\circ \mathbb{L}_i^{-1}(x)\}_{i = 0,\ldots, n},\quad x\in Q_i
\end{equation}

\subsection{Images of $\pi_1\mathcal{D}_{\id}(T^2)$ and $\pi_0\Delta'(f)$ in $\mathcal{S}_Q\wr_n\ZZZ$}
\label{subsec:images2}
The following lemma easily follows from the definition of the isomorphism $\xi_2$ similarly to~\ref{enum:lm:ZZ:1} of Lemma \ref{lm:ZZ}.
\begin{lemma}
	Let $Z =(\underbrace{e',\ldots,e'}_n,n)$ be the element from $S_Q\wr_n\ZZZ$, where $e'$ is the unit of $\mathcal{S}_Q$.
	Then $p_1(\mathbb{L}) = \xi_2(Z)$.
	\qed
\end{lemma}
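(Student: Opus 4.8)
The plan is to evaluate both sides of the asserted equality directly and to match them, mirroring the argument given for~\ref{enum:lm:ZZ:1} of Lemma~\ref{lm:ZZ}. First I would record what the two sides mean. Since $\mathbb{L}_1 = \id_{T^2}$, the restriction $\mathbb{L}:T^2\times[0,1]\to T^2$ is a loop in $\mathcal{D}_{\id}(T^2)$ based at $\id_{T^2}$, and hence, by the definition of the fibration map $p(h)=f\circ h$ from Theorem~\ref{thm:homotopy-orbits}, its image is $p_1(\mathbb{L}) = [f\circ \mathbb{L}_t]\in\pi_1\mathcal{O}_f(f)$. So the whole statement reduces to identifying $\xi_2(Z)$ with this class.

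Next I would compute $\xi_2(Z)$ from its defining formulas~\eqref{eq:xi_2}–\eqref{eq:hh(x)}. Substituting $h_i=e'$ for every $i$ lets us take each chosen isotopy $h_i^t$ to be the constant isotopy $\id_Q$, while substituting the $\ZZZ$-component $a=n$ turns the coefficient $\tfrac{at}{n}$ into $t$. With these substitutions, formula~\eqref{eq:hh(x)} reads, for $x\in Q_i$,
\[
h^t(x) = \mathbb{L}_{i+t}\circ \mathbb{L}_i^{-1}(x).
\]
The key simplification is then the one-parameter group property $\mathbb{L}_{i+t}\circ \mathbb{L}_i^{-1} = \mathbb{L}_t$, which collapses the right-hand side to $\mathbb{L}_t(x)$ on every cylinder $Q_i$. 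Because the resulting value no longer depends on the index $i$, the piecewise-defined diffeomorphism $h^t$ coincides with the single global flow $\mathbb{L}_t$ on all of $T^2$, and therefore $\xi_2(Z) = [f\circ h^t] = [f\circ \mathbb{L}_t] = p_1(\mathbb{L})$, as required.

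I do not expect a genuine obstacle here: the whole proof is a substitution followed by the flow identity. The only point meriting a line of care—rather than real difficulty—is confirming that the locally defined pieces $\mathbb{L}_{i+t}\circ \mathbb{L}_i^{-1}$ glue to a single well-defined isotopy across the cylinders $Q_i$; but this is automatic once the flow property rewrites each piece as $\mathbb{L}_t$, so no separate compatibility condition along the boundary curves $C_i$ needs to be checked. This is exactly why the lemma follows ``easily'' in the same manner as~\ref{enum:lm:ZZ:1} of Lemma~\ref{lm:ZZ}.
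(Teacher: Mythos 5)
Your proof is correct and is essentially the paper's own argument: the paper disposes of this lemma by exactly the same substitution into the defining formulas \eqref{eq:xi_2}--\eqref{eq:hh(x)} (it states that the claim ``easily follows from the definition of $\xi_2$ similarly to~\ref{enum:lm:ZZ:1} of Lemma~\ref{lm:ZZ}'', whose proof is precisely your computation that the resulting isotopy collapses to the flow, $h^t=\mathbb{L}_t$). Your additional remarks --- that the constant isotopies may be chosen for $e'$ and that the pieces glue automatically once each equals $\mathbb{L}_t$ --- are the right justifications for the steps the paper leaves implicit.
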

% \begin{proof}
% The proof easily follows from the definition of the isomorphism $\xi_2$ similarly to~\ref{enum:lm:ZZ:1} of Lemma \ref{lm:ZZ}.
% \end{proof}
The image of another generator $\mathbb{M}$ in $\pi_1\mathcal{O}_f(f)$ is ``invisible'' in our description of the group $\pi_1\mathcal{O}_f(f)$ via $\mathcal{S}_Q\wr_n\ZZZ$, but in \cite[Theorem 6]{MaksymenkoFeshchenko:2015:HomPropCycleNonTri} we showed that $p_1(\mathbb{M})$ in $\pi_1\mathcal{O}_f(f)$ is given by $[f\circ \theta]$, where $\theta = \theta_0\circ \ldots\circ\theta_{n-1}$ is the generator of $H = \langle\theta\rangle$, and $\theta_i$ is a slide along $C_i$, see Section~\ref{sec:Delta'(f)}.
Then by Theorem~\ref{thm:H-kerj0} groups $\pi_0\Delta'(f)\times \langle\mathbb{M} \rangle$ and $\pi_0\Delta'(f,\mathcal{C})$ are isomorphic as subgroups of $\pi_1\mathcal{O}_f(f)$.
So $\pi_0\Delta'(f)\cong \pi_0\Delta'(f,\mathcal{C})/H$.
\begin{lemma}
	An isomorphism $\xi_2$ induces an isomorphism 
	\[ \xi_2|_{\Delta_Q^n}:(\Delta_Q^n,0)\to \iota_1({\pi_0\Delta'(f,\mathcal{C})}),\]
	where $\Delta_Q = \pi_0\Delta'(f|_Q,\partial Q)$.
	So $\pi_0\Delta'(f)$ is isomorphic to $\Delta_Q^n/H$.
\end{lemma}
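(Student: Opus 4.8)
The plan is to reproduce, for the cyclic family $\mathcal{C}$, the argument used for part~\ref{enum:lm:ZZ:2} of Lemma~\ref{lm:ZZ}. Since $\xi_2$ is an isomorphism, its restriction to the subgroup $(\Delta_Q^n,0)\subset\mathcal{S}_Q\wr_n\ZZZ$ is automatically a monomorphism, so the entire content is to identify its image with $\iota_1(\pi_0\Delta'(f,\mathcal{C}))$. It is convenient to note first that every element of $\Delta'(f,\mathcal{C})$ is fixed on a neighborhood of $\mathcal{C}=C_0\cup\cdots\cup C_{n-1}=\partial(Q_0\sqcup\cdots\sqcup Q_{n-1})$ and acts trivially on $\Gamma_f$; hence restriction to the cylinders gives an isomorphism $\Delta'(f,\mathcal{C})\cong\prod_{i=0}^{n-1}\Delta'(f|_{Q_i},\partial Q_i)$, and, using that each $Q_i$ is carried onto $Q$ by some element of $\mathcal{S}'(f)$, this yields $\pi_0\Delta'(f,\mathcal{C})\cong\Delta_Q^n$.

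For the containment $\xi_2(\Delta_Q^n,0)\subseteq\iota_1(\pi_0\Delta'(f,\mathcal{C}))$, take $(\{h_i\},0)$ with each $h_i\in\Delta_Q$ and fix isotopies $h_i^t$ from $\id_Q$ to $h_i$. Putting $a=0$ in formula~\eqref{eq:hh(x)} produces an isotopy $h^t$ of $T^2$ with $h^0=\id_{T^2}$ whose endpoint $h^1$ restricts on each $Q_i$ to the relocation of $h_i$ onto $Q_i$. Because each $h_i$ is fixed on $\partial Q$ and induces the trivial homeomorphism of the graph of $f|_Q$, the assembled diffeomorphism $h^1$ is fixed near $\mathcal{C}$ and acts trivially on $\Gamma_f$; thus $h^1\in\Delta'(f,\mathcal{C})$ and $\xi_2(\{h_i\},0)=[f\circ h^t]=\iota_1([h^1])$.

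The main step is surjectivity onto $\iota_1(\pi_0\Delta'(f,\mathcal{C}))$, which I would prove on generators exactly as in Lemma~\ref{lm:ZZ}\ref{enum:lm:ZZ:2}. By the product decomposition above, $\pi_0\Delta'(f,\mathcal{C})$ is generated by classes $[\phi]$ of diffeomorphisms supported in a single cylinder $Q_i$ with $\phi|_{Q_i}\in\Delta'(f|_{Q_i},\partial Q_i)$, that is, by the Dehn twists on the disks of Lemma~\ref{lm:func-cylinder} and the boundary-twist elements. For such a $\phi$ we have $\iota_1([\phi])=[f\circ\phi^t]$, and reading off~\eqref{eq:hh(x)} with $a=0$ gives $[f\circ\phi^t]=\xi_2(\{h_j\},0)$, where $h_j=\id_Q$ for $j\neq i$ and $h_i$ is the relocation of $\phi|_{Q_i}$ back onto $Q$. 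As in the tree case, $h_i$ is conjugate to $\phi|_{Q_i}$, hence is again a Dehn twist (resp.\ boundary twist), so $h_i\in\Delta_Q$ and $(\{h_j\},0)\in(\Delta_Q^n,0)$. Therefore every generator lies in the image, and $\xi_2|_{\Delta_Q^n}$ is an epimorphism onto $\iota_1(\pi_0\Delta'(f,\mathcal{C}))$, hence an isomorphism.

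It remains to deduce the last assertion. Since $\iota_1$ is injective on $\pi_0\Delta'(f,\mathcal{C})$ --- its image is the subgroup $\pi_0\Delta'(f)\times\langle\mathbb{M}\rangle$ of $\pi_1\mathcal{O}_f(f)$ recorded above --- the composite $\iota_1^{-1}\circ\xi_2|_{\Delta_Q^n}$ is an isomorphism $\Delta_Q^n\cong\pi_0\Delta'(f,\mathcal{C})$. Combining it with $\pi_0\Delta'(f)\cong\pi_0\Delta'(f,\mathcal{C})/H$ gives $\pi_0\Delta'(f)\cong\Delta_Q^n/H$; here one checks that under the restriction isomorphism the generator $\theta=\theta_0\circ\cdots\circ\theta_{n-1}$ of $H$ restricts on each $Q_i$ to $\tau_0\circ\tau_1^{-1}$, i.e.\ to the element $(E_1,m_1,\ldots,E_k,m_k)$ of Lemma~\ref{lm:func-cylinder}\ref{enum:lm:func-cylinder:3}, so that $H$ maps onto the subgroup generated by~\eqref{eq:Gars}. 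I expect the surjectivity step to be the main obstacle: one must verify that the relocations in~\eqref{eq:hh(x)} carry the per-cylinder generators into $\Delta_Q$ up to isotopy rel $\mathcal{C}$, and that the slide $\theta$ indeed descends to the boundary Dehn-twist element on each cylinder; the remaining parts are formal consequences of the product decomposition and of $\xi_2$ being an isomorphism.
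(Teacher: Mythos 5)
Your proposal is correct and follows essentially the same route as the paper's proof: injectivity of $\xi_2|_{\Delta_Q^n}$ is automatic because $\xi_2$ is an isomorphism, and surjectivity onto $\iota_1(\pi_0\Delta'(f,\mathcal{C}))$ is verified on generators (Dehn twists supported in a single cylinder $Q_i$) by reading off formula~\eqref{eq:hh(x)} with $a=0$ and noting that the resulting diffeomorphism of $Q$ is conjugate to the generator, hence again a Dehn twist lying in $\Delta_Q$. Your extra verifications (the product decomposition of $\pi_0\Delta'(f,\mathcal{C})$, the explicit containment $\xi_2(\Delta_Q^n,0)\subseteq\iota_1(\pi_0\Delta'(f,\mathcal{C}))$, and the tracking of the generator $\theta$ of $H$ to the element of Lemma~\ref{lm:func-cylinder}\ref{enum:lm:func-cylinder:3}) are elaborations of steps the paper leaves implicit, not a different approach.
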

\begin{proof}
	Obviously, the isomorphism $\xi_2$ induces a monomorphism $\xi_2|_{\Delta_Q^n}$.
	Let $[\tau]$ be a generator of $\pi_0\Delta'(f,\mathcal{C})$.
	Then $\tau$ is a Dehn twist supported in $Q\in \{Q_i\}_{i = 0,\ldots, n-1}$.
	Fix an isotopy $\tau^t:T^2\to T^2$ such that $\tau^1 = \tau$ and $\tau^0 = \id$.
	Then $\iota([\tau]) = [f\circ \tau^t]$.
	From~\eqref{eq:xi_2} there exists a diffeomorphism $\tilde{h}:Q\to Q$ and an isotopy $\tilde{h}^t:Q\to Q$ between $\tilde{h}^1 = \tilde{h}$ and $\tilde{h}^0 = \id_Q$ such that $\xi_2([\tilde{h}], 0) = [f\circ \tau^t]$.
	By~\eqref{eq:hh(x)} $\tilde{h}$ is conjugate to $\tau_Q$, then $\tilde{h}$ is also a Dehn twist.
	Hence $\tilde{h}$ belongs to $\Delta_Q^n$.
	So $\xi_2|_{\Delta_Q^n}$ is an isomorphism, and hence $\pi_0\Delta'(f)$ is isomorphic to $\Delta_Q^n/H$.
\end{proof}

The following corollary directly follows from Lemma~\ref{lm:func-cylinder} and describes $\pi_1\mathcal{O}_f(f)$ for functions from $\mathscr{F}_1$ via embedded $2$-disks and the image of the generator $\mathbb{M}$ in $\pi_1\mathcal{O}_f(f)$.

\begin{corollary}\label{cor:pi1-disks}
	\begin{enumerate}[wide, label={\rm(\arabic*)}]
		\item\label{cor:pi1-disks:1}
		There exist a cylinder $Q$ and a set of mutually disjoint $2$-disks $\mathbb{Y} = \{Y_{ij}\}_{i = 0,\ldots, k}^{j = 0,\ldots, c_i}\subset Q$ for some $k,c_i\in \NNN$, and $m_i\in \NNN$, $i = 1,\ldots, k$ such that the following groups are isomorphic
		\[
		\pi_1\mathcal{O}_f(f) \stackrel{\xi_2^{-1}}{\cong}\mathcal{S}_Q\wr_n\ZZZ \stackrel{\zeta'}{\cong} \mathcal{S}_{\mathbb{Y}}\wr_n\ZZZ,
		\]
		where $\zeta' = \underbrace{\zeta\times\ldots\times\zeta\times\id_{\ZZZ}}.$
		
		\item\label{cor:pi1-disks:2}
		The group $H$ is normal in $\Delta_{\mathbb{Y}}^n$ is generated by
		\begin{equation}
			\label{eq:H}
			(\underbrace{(E_1, m_1,E_2,m_2,\ldots, E_k, m_k), \ldots, (E_1, m_1,E_2,m_2,\ldots, E_k, m_k)}_{n},
		\end{equation}
		where
		$E_i$ is the unit of the group $(\prod_{j = 1}^{c_i} \Delta_{Y_{ij}})^{m_i}$, and the image of $H$ in $ \mathcal{S}_{\mathbb{Y}}\wr_n\ZZZ$ is generated by
		\begin{equation}
			\label{eq:overE}
			(\underbrace{(E_1, m_1,E_2,m_2,\ldots, E_k, m_k), \ldots, (E_1, m_1,E_2,m_2,\ldots, E_k, m_k)}_{n}, 0),
		\end{equation}
	\end{enumerate}
\end{corollary}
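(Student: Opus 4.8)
The plan is to deduce both statements by transporting the structure furnished by Lemma~\ref{lm:func-cylinder} through the isomorphism $\xi_2$ of Theorem~\ref{thm-orbits-t2}\ref{enum:thm-orbits-t2:2}, and then to trace the single generator $\theta$ of $H$ through the resulting chain of identifications. For~\ref{cor:pi1-disks:1} I would first fix the cylinder $Q = Q_0$ bounded by $C_0$ and $C_1$ and apply Lemma~\ref{lm:func-cylinder} to $f|_Q$; this supplies the disks $\mathbb{Y}$, the integers $k,c_i,m_i$, and the isomorphism $\zeta = (\zeta_1,\zeta_2,\zeta_3)$, in particular $\zeta_2:\mathcal{S}_Q \to \mathcal{S}_{\mathbb{Y}}$. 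Since the base group of $\mathcal{S}_Q\wr_n\ZZZ = \mathcal{S}_Q^n\rtimes_{\alpha}\ZZZ$ is a direct power on which $\ZZZ$ acts by cyclic permutation of the coordinates, applying $\zeta_2$ in each coordinate and the identity on $\ZZZ$ produces the map $\zeta' = \zeta_2\times\cdots\times\zeta_2\times\id_{\ZZZ}$, which visibly intertwines the two shift actions and is therefore an isomorphism of wreath products. Composing with $\xi_2^{-1}$ yields the asserted chain of isomorphisms.

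For~\ref{cor:pi1-disks:2} the task is to compute $\zeta'(\xi_2^{-1}(\theta))$ for the generator $\theta = \theta_0\circ\cdots\circ\theta_{n-1}$ of $H = \langle\theta\rangle$. Since $\theta$ carries each cylinder $Q_i$ onto itself, reading off formula~\eqref{eq:hh(x)} with trivial $\ZZZ$-shift shows that $\xi_2^{-1}(\theta)$ has $\ZZZ$-component $0$ and that its $i$-th coordinate is the class of $\theta|_{Q_i}$, pushed to the model cylinder $Q$ by the translation $\mathbb{L}_i$, inside $\Delta_Q = \pi_0\Delta'(f|_Q,\partial Q)$. Because the slides $\theta_i$ are chosen equivariantly, all $n$ coordinates coincide, so it suffices to identify this single class.

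The crux, and the step I expect to be the main obstacle, is to show that $\theta|_{Q_i}$ represents in $\pi_0\Delta'(f|_{Q_i},\partial Q_i)$ the class $\tau_0\circ\tau_1^{-1}$ of the difference of the two boundary Dehn twists from Lemma~\ref{lm:func-cylinder}\ref{enum:lm:func-cylinder:3}. The idea is that inside $Q_i$ only the portions of $\theta_i$ (near $C_i$) and of $\theta_{i+1}$ (near $C_{i+1}$) are supported, and that each such half-slide is, by the explicit profiles recalled in Section~\ref{sec:Delta'(f)}, a Dehn twist along a curve parallel to the corresponding boundary, the two twists carrying opposite orientations; matching the twisting numbers at the two boundary circles identifies their product with $\tau_0\circ\tau_1^{-1}$ up to the choice of generator of $H\cong\ZZZ$. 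Granting this, Lemma~\ref{lm:func-cylinder}\ref{enum:lm:func-cylinder:3} sends $[\tau_0\circ\tau_1^{-1}]$ via $\zeta_1$ to $(E_1,m_1,\ldots,E_k,m_k)$, so that $\zeta'(\xi_2^{-1}(\theta))$ is precisely the element~\eqref{eq:overE}, and regarded as an element of $\Delta_{\mathbb{Y}}^n$ it is~\eqref{eq:H}. Finally, normality of $H$ in $\Delta_{\mathbb{Y}}^n$ is immediate, since $\Delta_{\mathbb{Y}}^n$ is free abelian, so every subgroup is normal.
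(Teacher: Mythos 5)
Your proposal is correct and takes essentially the same route as the paper: the paper states this corollary as following directly from Lemma~\ref{lm:func-cylinder} applied to $f|_{Q}$, with $\zeta$ applied coordinate-wise to form $\zeta'$, and with the generator $\theta$ of $H$ traced through $\xi_2^{-1}$ so that each coordinate becomes the boundary-twist difference $[\tau_0\circ\tau_1^{-1}]$, which Lemma~\ref{lm:func-cylinder}\ref{enum:lm:func-cylinder:3} identifies with $(E_1,m_1,\ldots,E_k,m_k)$, giving~\eqref{eq:H} and~\eqref{eq:overE}. Your explicit verification that $\theta|_{Q_i}$ is isotopic rel $\partial Q_i$ to $\tau_0\circ\tau_1^{-1}$ (the two half-slides being oppositely oriented twists near the two boundary circles) is precisely the step the paper delegates to the cited reference, so there is no gap.
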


\subsection{Final remarks for the  proof of~\ref{enum:thm:main:2} of Theorem~\ref{thm:main}}
Similarly to the proof of~\ref{enum:thm:main:1} of Theorem~\ref{thm:main}, groups $\pi_0\mathcal{S}'(f)$ and $G(f)$ are the corresponding  quotient-groups $\pi_1\mathcal{O}_f(f)/\pi_1\mathcal{D}_{\id}(T^2)$ and $G(f) = \pi_0\mathcal{S}'(f)/\pi_0\Delta'(f)$, see diagram \eqref{diag:main-diag}.
The images of $\pi_1\mathcal{D}_{\id}(T^2)$ and $\pi_0\Delta'(f)$ in $\pi_1\mathcal{O}_f(f)$ are known, see Subsection~\ref{subsec:images2}.
So it is easy to prove that an isomorphism $\zeta'\circ \xi_2$ induces the following isomorphisms
\begin{align*}
	&\pi_0\mathcal{S}'(f)\cong (\mathcal{S}_{\mathbb{Y}}\wr \ZZZ_n)/H, &
	&G(f)\cong G_{\mathbb{Y}}\wr\ZZZ_n,
\end{align*}
and so it induces an isomorphism of diagrams from~\ref{enum:thm:main:2} of Theorem~\ref{thm:main}.

%
%\bibliographystyle{plain}
%\bibliography{feshchenko}

\end{document}